\newtheorem{theorem}{Theorem}
\newtheorem{lemma}[theorem]{Lemma}
\newtheorem{cor}[theorem]{Corollary}
\theoremstyle{definition}
\newtheorem{definition}[theorem]{Definition}
\theoremstyle{remark}
\title{Harmonious sequences in groups with a unique involution }
\date{}
               \author{
  Mohammad Javaheri ~~~~~~~~~~~~~~~~~~~~~~ Lydia de Wolf\\
  \texttt{mjavaheri@siena.edu}~~~~~~~~~~~~~\texttt{ldewolf@siena.edu}\\
  515 Loudon Road \\
  Siena College, School of Science\\
  Loudonville, NY 12211\\
  }
\begin{document}

\maketitle

\begin{abstract}
 We study several combinatorial properties of finite groups that are related to the notions of sequenceability, R-sequenceability, and harmonious sequences. In particular, we show that in every abelian group $G$ with a unique involution $\imath_G$ there exists a permutation $g_0,\ldots, g_{m}$ of elements of $G \backslash \{\imath_G\}$ such that the consecutive sums $g_0+g_1, g_1+g_2,\ldots, g_{m}+g_0$ also form a permutation of elements of $G\backslash \{\imath_G\}$. We also show that in every abelian group of order at least 4 there exists a sequence containing each non-identity element of $G$ exactly twice such that the consecutive sums also contain each non-identity element of $G$ twice. We apply several results to the existence of transversals in Latin squares. \footnote{{\bf Keywords}: sequenceable groups, Latin squares, harmonious groups, complete mappings \\ {\bf MSC 2020}: 05E16, 20D60, 05B15}
\end{abstract}


\section{Introduction}

Given a sequence ${\bf g}: g_0,g_1,\ldots, g_m$ in a finite group $G$, the sequence of consecutive quotients ${\bf \bar g}: \bar{g}_0, \bar{g}_1,\ldots, \bar{g}_m$ is defined by letting $\bar{g}_0=g_0$ and $\bar g_i=g_{i-1}^{-1}g_i$ for all $1 \leq i \leq m$. A group is called {\it sequenceable} if there exists a sequence $\bf g$ in $G$ with $g_0=1_G$, where $1_G$ is the identity element of $G$, such that $\bf g$ and $\bf \bar g$ are both permutations of elements of $G$. Gordon \cite{G} proved that a finite abelian group is sequenceable if and only if it has a unique involution (i.e. an element of order 2). 

It is conjectured by Keedwell \cite{Keedwell} that, except for the dihedral groups $D_6, D_8$, and the quaternion group $Q_8$, every non-abelian group is sequenceable. Keedwell's conjecture is supported by the result that all solvable groups with a unique involution, except $Q_8$, are sequenceable \cite{AI1}. See \cite{Ollis} for a survey of results regarding sequenceable groups. 

Considering consecutive products instead of quotients, given a sequence ${\bf g}: g_0,g_1,\ldots, g_m$ in $G$, let the sequence ${\bf \hat g}: \hat{g}_0, \hat{g}_1,\ldots, \hat{g}_m$ be defined by letting  $\hat{g}_0=g_mg_0$ and $\hat{g}_i=g_{i-1}g_i$ for $1\leq i \leq m$. The following definition is consistent with that of Beals et al.~introduced in 1991 \cite{JG}.

\begin{definition}
Let $G$ be a group and $A$ be a finite subset of $G$. A sequence ${\bf g}$ in $A$ is called a {\it harmonious} sequence in $A$ if both $\bf g$ and $\bf \hat g$ are permutations of elements of $A$, in which case we say $A$ is {\it harmonious}.
\end{definition}

All odd groups are harmonious. All abelian groups except the elementary 2-groups and groups with a unique involution are harmonious. A dihedral group of order $n$ is harmonious if and only if $n>4$ and $n=0 \pmod 4$. It is also known that the dicyclic group of order $4n$ is not harmonious if $n$ is odd, and it is harmonious if $n=0 \pmod 4$ or $n=0 \pmod 6$ \cite{Wang}. Moreover, $G^\sharp=G \backslash \{0_G\}$ is harmonious for an abelian group $G$ of order greater than 3 if and only if $G$ does not have a unique involution \cite{JG}, where here and throughout, $0_G$ denotes the identity element of an abelian group $G$. 

Harmonious sequences are special examples of complete mappings. For $G$ a group and $A$ a finite subset of $G$, recall that a \textit{complete mapping} of $A$ is a bijection $\pi: A \rightarrow A$ such that $\{g*\pi(g): g\in A\}=A$. Hall and Paige studied complete mappings in relation to orthogonality problems in Latin squares \cite{HP}.

A harmonious sequence ${\bf g}: g_0,\ldots, g_{m}$ on a subset $A$ of a group $G$ gives rise to the cyclic complete mapping $\pi(g_i)=g_{i+1}$, $0\leq i \leq m$, and conversely, a cyclic complete mapping $\pi$ on $A$ gives rise to the harmonious sequence $g, \pi(g), \pi^2(g), \ldots, \pi^{m}(g)$ in $A$, where $g\in A$. 

The study of complete mappings itself is motivated by the open problem in the study of Latin squares known as the Ryser-Brualdi-Stein conjecture. Recall that a {\it Latin square} is an $n\times n$ array on $n$ symbols such that each symbol appears exactly once in each row and exactly once in each column. A {\it partial transversal} of a Latin square is a collection of cells which do not share any row, column, or symbol. A {\it full transversal} of an $n\times n$ Latin square is a partial transversal with $n$ cells, while a {\it near transversal} is a partial transversal with $n-1$ cells. The Ryser-Brualdi-Stein conjecture states that every odd Latin square has a full transversal and every even Latin square has a near transversal.

Given a group $G$, let $L(G)$ be its multiplication table (which is a Latin square), where the cell $(g,h)$ contains the term $gh$ for $g,h \in G$. Every full transversal of $L(G)$ corresponds to a complete mapping of the group, where each cell $(g,h)$ of the transversal indicates that one maps $g$ to $h$. Paige showed in 1947 that $L(G)$ has a full transversal for abelian group $G$ if and only if $G$ does not have a unique involution \cite{P}. Hall and Paige conjectured in 1955 that for a general group $G$, the Latin square $L(G)$ has a full transversal if and only if the 2-Sylow subgroups of $G$ are either trivial or non-cyclic, and they verified the conjecture in the solvable case \cite{HP}. The Hall-Paige conjecture was eventually proved in 2009 in the general case \cite{E,W}. 

The Ryser-Brualdi-Stein conjecture holds for group-based Latin squares \cite{JG,GH}. Since every odd group admits a harmonious sequence, a stronger result holds in the odd case, namely every group-based odd Latin square has a full transversal satisfying the following additional condition. 

\begin{definition}
    We say a collection of cells in a Latin square is {\it cyclic} if there is an ordering $c_1,\ldots, c_k$ of the cells in such a way that the row number of $c_{i+1}$ matches the column number of $c_i$ for all $1\leq i\leq k$ (where $c_{k+1}=c_1$). 
\end{definition}

If $L(G)$ has a full transversal, then it decomposes into full transversals \cite{VW}; we show in Section \ref{oddgroups} that if $G$ is an odd abelian group of order $n$, then $L(G)$ has $\phi(n)$ disjoint cyclic full transversals (Theorem \ref{abelianpseq}).

An odd Latin square does not necessarily admit a cyclic full transversal (see Figure 1.1). Let $G$ be a group with elements $g_1,\ldots, g_n$. Then the Latin square $\bar L(G)=\left ( g_i^{-1}g_j \right )_{1 \leq i,j \leq n}$ does not admit a cyclic full transversal, since $1_G$ only appears on the main diagonal and no cyclic full transversal intersects the main diagonal if $n>1$. Likewise, an even Latin square does not necessarily admit a cyclic near transversal (see Figure 1.1). If $G$ is a group that is not R-sequenceable (defined below), then $\bar L(G)$ does not admit a cyclic near transversal. We will show in Theorem \ref{evenneartrans} that $L(G)$ has a cyclic near transversal for all even abelian groups $G$.

\begin{figure}[!htb] \label{fig1}
   
    \begin{minipage}{.45\linewidth}

      \centering
        \begin{tabular}{|c|c|c|c|c|}
\hline
\( 0 \)&\( 1\)&\(2\)&\(3\)&\(4\)\\
\hline
\(4\)&\(0\)&\(1\)&\(2\)&\(3\)\\
\hline
\(3\)&\(4\)&\( 0\)&\(1\)&\(2\)\\
\hline
\(2\)&\(3\)&\(4\)&\(0\)&\(1\)\\
\hline
\(1\)&\(2\)&\(3\)&\(4\)&\(0\)\\
\hline
\end{tabular}
    \end{minipage}%
    \begin{minipage}{.55\linewidth}
      \centering
        
        \begin{tabular}{|c|c|c|c|c|c|}
\hline
\( 0 \)&\( 1\)&\(2\)&\(3\)&\(4\)&\(5\)\\
\hline
\(5\)&\(0\)&\(1\)&\(2\)&\(3\)&\(4\)\\
\hline
\(4\)&\(5\)&\( 0\)&\(1\)&\(2\)&\(3\)\\
\hline
\(3\)&\(4\)&\(5\)&\(0\)&\(1\)&\(2\)\\
\hline
\(2\)&\(3\)&\( 4\)&\(5\)&\(0\)&\(1\)\\
\hline
\(1\)&\(2\)&\(3\)&\(4\)&\(5\)&\(0\)\\
\hline
\end{tabular}
    \end{minipage} 
     \caption{The Latin square $\bar L(\mathbb{Z}_5)$ on the left does not admit a cyclic full transversal; the Latin square $\bar L(\mathbb{Z}_6)$ on the right does not admit a cyclic near transversal. }
\end{figure}

Among other combinatorial properties related to sequenceability are R-sequenceability and D-sequenceability. A permutation of non-identity elements $g_0,\ldots, g_{m}$ in a group $G$ is called an \textit{R-sequence} if the consecutive quotients $g_0^{-1}g_1$, $ g_1^{-1}g_2,\ldots,$ $g_{m}^{-1}g_0$ also form a permutation of non-identity elements of $G$. Every abelian group is either sequenceable or R-sequenceable \cite{AKP}. A group is called \textit{D-sequenceable}, if there exists a sequence $\bf g$ in $G$ such that every element of $G$ appears exactly twice in each of $\bf g$ and $\bf \bar g$. All abelian groups are D-sequenceable, and it is conjectured that all finite groups are D-sequenceable \cite{J}. We introduce the following harmonious counterpart of D-sequenceability.

\begin{definition}\label{def1}
Let $G$ be a group and $A$ be a finite subset of $G$. We say ${\bf g}$ is a {\it doubly harmonious} sequence in $A$ if every element of $A$ appears exactly twice in each of ${\bf g}$ and ${\bf \hat g}$, in which case we say $A$ is {\it doubly harmonious}.   
\end{definition}

Every doubly harmonious sequence in $G$ gives rise to a cyclic duplex in $L(G)$. A \textit{duplex} in an $n\times n$ Latin square is a set of $2n$ cells that contains two cells from each row, column, and symbol. Rodney conjectured that every Latin square contains a duplex \cite{R}. Rodney's conjecture has been verified for $L(G)$ if $G$ is solvable \cite{VW}. 

 In Section \ref{dpgrps}, we show that if $G$ is an odd group (possibly trivial) and $H$ is abelian, then $G \times H$ is doubly harmonious (Theorem \ref{dhom}), and so $L(G\times H)$ admits a cyclic duplex. In addition, we show that if $G$ is an abelian group of order at least 4, then $G^\sharp = G \backslash \{0_G\}$ is doubly harmonious (Theorem \ref{abel-drp}). We conjecture that every finite group $G$ is doubly harmonious, and if $G$ is of order at least 4, then $G^\sharp$ is doubly harmonious.

In Section \ref{cpgrps}, we consider two combinatorial properties of groups with a unique involution. We prove that if $G$ is an abelian group with a unique involution $\imath_G$, then $G^\flat=G \backslash \{\imath_G\} $ is harmonious (Theorem \ref{order2CP}), and if $G$ is an abelian group of order at least 10, then $G^\natural=G \backslash \{\imath_G, 0_G\} $ is harmonious (Theorem \ref{order2RCP}).


\section{Strongly harmonious sequences}\label{oddgroups}

We first establish a stronger statement regarding harmonious sequences in odd abelian groups.

\begin{definition}\label{defstrongp}
    We say a sequence $g_0,\ldots, g_{\ell-1}$ in a group $G$ of order $\ell$ is {\it strongly harmonious} if, with indices computed modulo $\ell$, we have
    \begin{itemize}
        \item[i)] for every integer $k$, the terms $g_{i}g_{i+k}$, $0\leq i \leq \ell-1$, form a permutation of elements of $G$.
        \item[ii)] for every integer $0\leq k \leq \ell - 1$, $g_kg_{\ell-k}=1_G$. 
        \end{itemize}
\end{definition}

It follows from (i) with $k=0$ that $g_i^2$, $0\leq i \leq \ell-1$, form a permutation of $G$, hence $G$ must be odd. It then follows from (ii) that $g_0^2=1_G$, hence $g_0=1_G$. 

\begin{theorem}\label{abelianpseq}
 Every odd abelian group is strongly harmonious.  
\end{theorem}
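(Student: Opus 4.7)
The plan is a reduction in three stages: the cyclic case, a coprime direct-product lemma, and an explicit construction for odd abelian $p$-groups.

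For the cyclic case, if $G=\langle a\rangle$ has odd order $\ell$, I take $g_i=a^i$: then $g_ig_{i+k}=a^{2i+k}$ is a permutation of $G$ as $i$ varies since $\gcd(2,\ell)=1$, giving (i), and $g_kg_{\ell-k}=a^\ell=1_G$ gives (ii). Next I would prove a product lemma: if $G_1,G_2$ are strongly harmonious with coprime odd orders $\ell_1,\ell_2$, then $G_1\times G_2$ is strongly harmonious. The construction uses the CRT isomorphism $\mathbb{Z}_{\ell_1\ell_2}\cong \mathbb{Z}_{\ell_1}\times \mathbb{Z}_{\ell_2}$ to index the product sequence by pairs, setting $g_{(i_1,i_2)}=(g^{(1)}_{i_1},g^{(2)}_{i_2})$. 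A cyclic shift by $k$ in $\mathbb{Z}_{\ell_1\ell_2}$ corresponds under CRT to independent shifts $(k_1,k_2)$ in the two factors, so the sum $g_{(i_1,i_2)}\,g_{(i_1+k_1,i_2+k_2)}$ decomposes coordinate-wise and each coordinate is already a permutation by hypothesis; condition (ii) passes through coordinate-wise as well. Iterating this over the Sylow decomposition $G=\bigoplus_p G_p$, whose factors have pairwise coprime orders, reduces the theorem to odd abelian $p$-groups.

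For $G\cong \mathbb{Z}_{p^{a_1}}\oplus\cdots\oplus \mathbb{Z}_{p^{a_r}}$ with $p$ odd, I would construct $\pi: \mathbb{Z}_{p^{a_1+\cdots+a_r}}\to G$ explicitly. A natural mixed-radix bijection satisfies (i): a cyclic shift in $\mathbb{Z}_{p^{a_1+\cdots+a_r}}$ translates into coordinate-wise shifts in $G$ with controlled carry propagation, and the oddness of $p$ keeps each $i\mapsto \pi(i)+\pi(i+k)$ a bijection. The obstruction is condition (ii): a plain mixed-radix enumeration typically gives $\pi(i)+\pi(\ell-i)$ lying on a fixed non-zero element of $G$ rather than $0$ (one sees this already in $\mathbb{Z}_3\oplus \mathbb{Z}_3$). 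To repair this, I would twist the higher-digit coordinates so that antipodal positions $i$ and $\ell-i$ are matched to inverse pairs of $G$, and verify that this twist does not destroy (i).

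The main obstacle is the $p$-group case. The coprime product lemma cannot reduce within a single prime, so one must construct a sequence that simultaneously enforces the strong permutation condition (i) for every shift and the antipodal condition (ii). Balancing these two requirements, without losing one when fixing the other, is the delicate combinatorial core of the argument.
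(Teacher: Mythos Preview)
Your cyclic base case and coprime product lemma are both correct, but the proposal does not actually prove the theorem: the $p$-group case is left open. You correctly diagnose that a naive mixed-radix enumeration of $\mathbb{Z}_{p^{a_1}}\oplus\cdots\oplus\mathbb{Z}_{p^{a_r}}$ fails condition (ii), and you propose an unspecified ``twist'' to repair it while preserving (i), but you neither produce the twist nor argue that one exists. Since the entire difficulty of the non-cyclic case lives here---your coprime lemma contributes nothing when all invariant factors share a prime---what you have written is a reduction to the hard step rather than a proof.

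The paper sidesteps this obstacle by proving a stronger product step: if $H$ is odd and strongly harmonious and $m$ is odd, then $H\times\mathbb{Z}_m$ is strongly harmonious, with \emph{no} coprimality hypothesis on $|H|$ and $m$. The construction lists $m$ copies of the $H$-sequence and, in the $\mathbb{Z}_m$ coordinate, alternates between $j$ and $j+1$ within the $j$th copy; formally $p_i=\bigl(h_{i\bmod n},\,\lfloor i/n\rfloor+\epsilon_i\bigr)$ with $\epsilon_i\in\{0,1\}$ determined by the parity of $i+\lfloor i/n\rfloor$. The resulting identity $p_{i+jn}=p_i+(0,j)$ reduces both (i) and (ii) to the corresponding properties of the $H$-sequence together with the invertibility of $2$ modulo $m$. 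One then reaches every odd abelian group by iterating along the invariant-factor decomposition $\mathbb{Z}_{n_1}\times\cdots\times\mathbb{Z}_{n_r}$, and the $p$-group case needs no separate treatment. If you want to salvage your route, the ``twist'' you are looking for is precisely this parity-alternation in the outer coordinate; once you see it, the CRT detour through Sylow subgroups becomes unnecessary.
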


\begin{proof}
    If $G \cong \mathbb{Z}_m$, where $m$ is odd, then one simply lets $g_i=i$, $0\leq i \leq m-1$. It is then sufficient to prove that if an odd abelian group $H$ has a strongly harmonious sequence, then $H\times \mathbb{Z}_m$ has a strongly harmonious sequence, where $m$ is odd. Let $h_0,h_1,\ldots, h_{n-1}$ be a strongly harmonious sequence in $H$. We denote an element of $(h,i) \in G=H \times \mathbb{Z}_m$ by $\genfrac{}{}{0pt}{1}{i}{h}$. Then the sequence below is a strongly harmonious sequence in $G$:
\begin{equation}\label{strongp} {\bf p}:~
\overbrace{\underbrace{\begin{matrix}0 \\ h_0\end{matrix}, \begin{matrix}1 \\ h_1\end{matrix}, \ldots, \begin{matrix}0 \\ h_{n-1}\end{matrix}}_\text{first copy}}^\text{alternating 0 and 1}, \overbrace{\underbrace{ \begin{matrix}1 \\ h_0\end{matrix}, \begin{matrix}2 \\ h_1\end{matrix}, \ldots,\begin{matrix}1 \\ h_{n-1}\end{matrix}}_\text{second copy}}^\text{alternating 1 and 2}, \ldots,\overbrace{\underbrace{\begin{matrix}m-1 \\ h_0\end{matrix},\begin{matrix}0 \\ h_1\end{matrix}, \ldots, \begin{matrix}m-1 \\ h_{n-1}\end{matrix}}_\text{$m$'th copy}}^\text{alternating $m-1$ and 0}.
\end{equation}
To be more precise, for $0\leq i \leq mn-1$, we let
\[
p_i= \begin{cases} \left ( h_{i}, \left \lfloor {i}/{n} 
 \right \rfloor  \right ) & \mbox{if $i+\lfloor i/n \rfloor $ is even;} \\
 \left ( h_i, \left \lfloor {i}/{n} 
 \right \rfloor +1 \right  ) & \mbox{if $i+\lfloor i/n \rfloor $ is odd;}
 \end{cases}
 \]
 where the index of $h_i$ is computed modulo $n$. 
 
 It follows from this definition that $p_{i+jn}=p_i+(0,j)$ for all integers $i,j$. It is clear from \eqref{strongp} that each element of $G$ appears exactly once in ${\bf p}$. To see that each element of $G$ appears exactly once in ${\bf \hat p}$, let $(h,t) \in H \times \mathbb{Z}_m$ be arbitrary. Since ${\bf h}$ is strongly harmonious, there exists $0\leq i \leq n-1$ such that $h=h_i+h_{i+k}$. Let $r\in \mathbb{Z}_m$ such that $p_i+p_{i+k}=(h_i+h_{i+k},r)$. We choose an integer $j$ such that $t=r+2j \pmod m$. Then 
$$p_{i+jn}+p_{i+jn+k}=p_i+(0,j)+p_{i+k}+(0,j)=(h_i+h_{i+k},r+2j)=(h,t).$$
It is straightforward to show that $p_i+p_{mn-i}=0$. 
\end{proof}

Let $G$ be an odd abelian group and $g_0,\ldots, g_{n-1}$ be a strongly harmonious sequence in $G$. Then, for each integer $k$, the cells $(g_i,g_{i+k})$, $0\leq i \leq n-1$, form a full transversal $\Lambda_k$ in $L(G)$, where the indices are computed modulo $n$. Clearly, $\Lambda_k$ is a cyclic full transversal if and only if $\gcd(k,n)=1$.

\begin{cor}
    Let $G$ be an abelian group of odd order $n$. Then $L(G)$ has $n$ disjoint full transversals, $\phi(n)$ of which are cyclic. 
\end{cor}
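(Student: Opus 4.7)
The corollary is essentially a counting consequence of the discussion immediately preceding it, so my plan is to extract the construction from that paragraph and verify the three required properties: the $\Lambda_k$ are full transversals, they are pairwise disjoint, and exactly $\phi(n)$ of them are cyclic.

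First, I would invoke Theorem \ref{abelianpseq} to fix a strongly harmonious sequence $g_0,\ldots,g_{n-1}$ in $G$. For each $k \in \{0,1,\ldots,n-1\}$, define
\[
\Lambda_k = \{(g_i, g_{i+k}) : 0 \leq i \leq n-1\},
\]
with indices read modulo $n$. Since $g_0,\ldots,g_{n-1}$ is a permutation of $G$, the first coordinates are all distinct (no repeated row) and so are the second coordinates (no repeated column). Condition (i) of Definition \ref{defstrongp} applied to the given $k$ guarantees that the symbols $g_i + g_{i+k}$ are distinct, so $\Lambda_k$ is a full transversal of $L(G)$.

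Next I would show the $n$ transversals are pairwise disjoint. If $(g_i, g_{i+k}) = (g_j, g_{j+k'})$ for some $k \neq k'$ in $\{0,\ldots,n-1\}$, then $g_i = g_j$ forces $i = j$, after which $g_{i+k} = g_{i+k'}$ forces $k \equiv k' \pmod n$, a contradiction. Hence the sets $\Lambda_0,\ldots,\Lambda_{n-1}$ partition the $n^2$ cells of $L(G)$ into $n$ full transversals.

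Finally, for the cyclic count, I would use the remark preceding the corollary: $\Lambda_k$ is cyclic iff $\gcd(k,n) = 1$, because to cyclically order the cells we need to traverse them as $(g_{i_0}, g_{i_0 + k}), (g_{i_0+k}, g_{i_0+2k}), \ldots$, which visits all $n$ rows exactly when $k$ generates $\mathbb{Z}_n$. The number of such $k \in \{0,1,\ldots,n-1\}$ is $\phi(n)$ by definition of Euler's totient. The main (minor) obstacle is just being careful about the $k=0$ case, but it causes no trouble: $\Lambda_0$ is the diagonal $\{(g_i, g_i)\}$, which is a legitimate full transversal whenever $g_i^2$ runs over $G$, guaranteed by (i) with $k=0$, and since $\gcd(0,n) = n > 1$ it is not counted among the cyclic ones.
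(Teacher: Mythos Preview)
Your proposal is correct and is exactly the argument the paper intends: the corollary is stated without proof because the paragraph immediately preceding it already defines $\Lambda_k=\{(g_i,g_{i+k})\}$ from a strongly harmonious sequence and asserts that $\Lambda_k$ is a cyclic full transversal iff $\gcd(k,n)=1$; you have simply filled in the straightforward verifications (transversality, disjointness, the cyclicity criterion). The only nit is that your remark ``$\gcd(0,n)=n>1$'' tacitly assumes $n>1$, but the trivial case $n=1$ is harmless.
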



\section{Doubly harmonious sequences }\label{dpgrps}

We begin this section with an application of results on harmonious groups.

\begin{theorem}\label{abeliandp}
    Every finite abelian group admits a doubly harmonious sequence. 
\end{theorem}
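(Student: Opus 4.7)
The plan is to split into cases based on whether $G$ is harmonious. For a harmonious abelian group $G$, any harmonious sequence $g_0, g_1, \ldots, g_{n-1}$ in $G$ doubled into $g_0, g_1, \ldots, g_{n-1}, g_0, g_1, \ldots, g_{n-1}$ is immediately doubly harmonious: each element of $G$ appears exactly twice, and the $2n$ cyclic consecutive sums list each harmonious sum $g_i + g_{(i+1) \bmod n}$ twice, covering $G$ doubly. Since the non-harmonious abelian groups are, as cited in the introduction, precisely the elementary 2-groups and the groups with a unique involution, what remains is to handle these two families.

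For $G = \mathbb{Z}_2$, the sequence $0, 0, 1, 1$ has cyclic sums $0, 1, 0, 1$ and is doubly harmonious by inspection. For $G$ with a unique involution $\imath_G$ and $|G| \geq 4$, I would appeal to Theorem~\ref{order2CP} (proved later in Section~\ref{cpgrps}) for a harmonious sequence $g_0, g_1, \ldots, g_m$ on $G^\flat = G \setminus \{\imath_G\}$ with $m = |G|-2$. After cyclically rotating so that $g_m = 0_G$, the candidate sequence is
\[ g_0, g_1, \ldots, g_m, \imath_G, \imath_G + g_0, \imath_G + g_1, \ldots, \imath_G + g_m, 0_G, \]
of length $2|G|$. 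Using $2\imath_G = 0_G$ together with $g_m = 0_G$, the first block exhausts $G^\flat$, the shifted block $\imath_G + g_0, \ldots, \imath_G + g_m$ exhausts $G \setminus \{0_G\}$ (noting $\imath_G + g_m = \imath_G$), and the explicit $\imath_G$ and trailing $0_G$ supply the second occurrences of $\imath_G$ and $0_G$. For the cyclic sums, the four boundary transitions $g_m + \imath_G$, $\imath_G + (\imath_G + g_0)$, $(\imath_G + g_m) + 0_G$, and $0_G + g_0$ simplify to $\imath_G, g_0, \imath_G, g_0$, while the remaining $2m$ sums repeat the non-cyclic harmonious sums on $G^\flat$---which form $G^\flat \setminus \{g_0\}$---twice; together these cover $G$ exactly twice.

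For $G$ an elementary 2-group of order at least 4, I would invoke Theorem~\ref{abel-drp} (proved later in this section), giving a doubly harmonious sequence on $G^\sharp = G \setminus \{0_G\}$, and extend it to $G$ by inserting a pair of $0_G$'s at a position chosen to keep the sum multiset balanced. The main obstacle lies in this elementary 2-group case: the involution trick used in the unique-involution case fails here because there is no distinguished involution, and replacing $\imath_G$ by an arbitrary $a \in G^\sharp$ causes $a + g_0$ to coincide with an existing non-cyclic harmonious sum, overcounting certain elements in the multiset. A more careful combinatorial argument, leveraging the forward reference to Theorem~\ref{abel-drp} and a judicious insertion point for the two zeros, will be needed to close this last case.
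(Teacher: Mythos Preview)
Your argument has a genuine gap in the elementary $2$-group case, which you yourself acknowledge. The fix, however, is much simpler than the route you are attempting through Theorem~\ref{abel-drp}. In an elementary $2$-group every element is its own inverse, so for any sequence ${\bf g}$ the consecutive-sum sequence ${\bf \hat g}$ coincides with the consecutive-quotient sequence ${\bf \bar g}$. Hence any D-sequence is automatically a doubly harmonious sequence. Since every abelian group is D-sequenceable (see~\cite{J}, as cited in the introduction), the elementary $2$-group case follows in one line. Your attempt to insert two copies of $0_G$ into a doubly harmonious sequence on $G^\sharp$ cannot be made to work by a local insertion: removing one sum $a+b$ and adding three sums $a,0_G,b$ (or the analogous two-insertion variant) can never change the sum multiset by exactly $+\{0_G,0_G\}$ when $a,b\in G^\sharp$.

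Your treatment of the unique-involution case via Theorem~\ref{order2CP} is correct---I checked that the shifted-block sequence you wrote down really does list every element twice and every consecutive sum twice---but it is a detour that introduces a forward dependence on Section~\ref{cpgrps}. The paper handles this case more directly: if $G$ is abelian with a unique involution and is not $\mathbb{Z}_2$, then $\mathbb{Z}_2\times G$ is abelian, not an elementary $2$-group, and has non-cyclic $2$-Sylow subgroup, so $\mathbb{Z}_2\times G$ is harmonious by the classification in~\cite{JG}. Projecting a harmonious sequence on $\mathbb{Z}_2\times G$ down to $G$ along the $2$-to-$1$ map $\mathbb{Z}_2\times G\to G$ immediately gives a doubly harmonious sequence on $G$. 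This avoids any forward reference and is considerably shorter.
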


\begin{proof}
    Let $G$ be a finite abelian group. If $G$ is harmonious, then it is doubly harmonious. Thus, suppose that $G$ is not harmonious, and so $G$ is either an elementary 2-group or has a unique involution. If $G$ is an elementary 2-group i.e. $G \cong (\mathbb{Z}_2)^k$, where $k\geq 1$, then $G$ is D-sequenceable \cite{J}, hence it is doubly harmonious (since $a+b=-a+b$ for all $a,b \in(\mathbb{Z}_2)^n$). 
    
    Thus, suppose that $G$ is not an elementary 2-group and has a unique involution. It follows that $G \cong \mathbb{Z}_{2^m} \times H$, where $m > 1$ or $m=1$ and $H$ is nontrivial. Then $\mathbb{Z}_2 \times G$ is not an elementary 2-group and its 2-Sylow subgroup is not cyclic, and so $\mathbb{Z}_2 \times G$ is harmonious. By projecting a harmonious sequence in $\mathbb{Z}_2 \times G$ onto $G$ via the projection $\mathbb{Z}_2 \times G \rightarrow G$, we obtain a doubly harmonious sequence in $G$.  
\end{proof}

We conjecture that all finite groups are doubly harmonious. Clearly, if a finite group $G$ is harmonious, then it is doubly harmonious (since every harmonious sequence can be doubled to obtain a doubly harmonious sequence). As a non-abelian and non-harmonious example, if $k$ is odd, then the dihedral group $D_{2k}$ is not harmonious, but it is doubly harmonious via projecting a harmonious sequence in $D_{4k}$ onto a doubly harmonious sequence in $D_{2k}$. The next theorem (Theorem \ref{dhom}) provides more support for the conjecture.

Recall that a D-sequence in a group $G$ is a sequence ${\bf g}$ in $G$ such that every element of $G$ appears exactly twice in each of ${\bf g}$ and ${\bf \bar g}$ (where ${\bf \bar g}$ is the sequence of consecutive quotients). We say a D-sequence ${\bf g}: g_0,g_1,\ldots, g_{2n-1}$ in a group $G$ is {\it cyclic} if $g_0=g_{2n-1}=0$, where $n=|G|$. It was shown in \cite{J} that every finite abelian group admits a cyclic D-sequence. We use this result in the following lemma. 

\begin{lemma}\label{dseq}
    Let $H$ be an abelian group of order $m$. Then there is a sequence $k_0,\ldots, k_{4m-1}$ in $H$ such that
    \begin{itemize}
    \item[i)] The terms $k_0,k_2,\ldots, k_{4m-2}$ contain each element of $H$ exactly twice. 
    \item[ii)] The terms $k_1,k_3,\ldots, k_{4m-1}$ contain each element of $H$ exactly twice. 
    \item[iii)]  The terms $\hat k_0, \hat k_2, \ldots, \hat k_{4m-2}$ contain each element of $H$ exactly twice. 
    \item[iv)] The terms $\hat k_1, \hat k_3,\ldots, \hat k_{4m-1}$ contain each element of $H$ exactly twice.
    \end{itemize}
\end{lemma}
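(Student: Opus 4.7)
The plan is to construct the sequence ${\bf k}$ by interleaving structures derived from a cyclic D-sequence of $H$, whose existence is granted by \cite{J}. Let ${\bf g}\colon g_0,\ldots, g_{2m-1}$ denote such a cyclic D-sequence, so that $g_0=g_{2m-1}=0_H$ and both $\bf g$ and its consecutive-difference sequence $\bar{\bf g}$ are D-sequences in $H$.

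The ansatz I would use is to partition the $4m$ positions of ${\bf k}$ by residue modulo $4$, writing $k_{4j+r}=f_r(j)$ for $r\in\{0,1,2,3\}$ and $0\leq j\leq m-1$, where $f_0,f_1,f_2,f_3\colon \{0,\ldots,m-1\}\to H$ are bijections to be chosen. Under this ansatz, conditions (i) and (ii) are immediate: the even-indexed subsequence of $\bf k$ interleaves $f_0$ with $f_2$, and the odd-indexed subsequence interleaves $f_1$ with $f_3$, and each interleaving covers $H$ twice. Conditions (iii) and (iv) reduce to the multiset coverage statements that $\{f_3(j-1)+f_0(j)\}_{j}\cup\{f_1(j)+f_2(j)\}_{j}$ and $\{f_0(j)+f_1(j)\}_{j}\cup\{f_2(j)+f_3(j)\}_{j}$ each cover $H$ exactly twice, where $j-1$ is taken modulo $m$.

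A clean sufficient version of these multiset conditions is that each of the four sum-functions $f_0+f_1$, $f_2+f_3$, $f_3\circ\sigma+f_0$, and $f_1+f_2$ (with $\sigma(j)=j-1\bmod m$) is itself a bijection $\{0,\ldots, m-1\}\to H$. This amounts to the existence of four compatibly chosen complete mappings of $H$; by Paige's theorem, $H$ admits a complete mapping precisely when $H$ has no unique involution. In particular, if $|H|$ is odd, taking all four $f_r$ to equal any fixed additive enumeration $\{0,\ldots, m-1\}\to H$ already succeeds, because doubling is a bijection on odd-order groups.

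The main obstacle is the case where $H$ has a unique involution, since then no complete mapping of $H$ exists and the sufficient condition above breaks; in that case one must find $f_0,f_1,f_2,f_3$ whose pairwise sums are not individually bijections but whose unions still cover $H$ twice. Small cases such as $H=\mathbb{Z}_4$ can be handled directly. For the general case, I would pass to $\mathbb{Z}_2\times H$, which is doubly harmonious by Theorem \ref{abeliandp}, and extract ${\bf k}$ from the $H$-projection of a carefully chosen length-$4m$ doubly harmonious sequence in $\mathbb{Z}_2\times H$ whose $\mathbb{Z}_2$-coordinate follows a period-$4$ pattern such as $0,0,1,1,0,0,1,1,\ldots$; the four residue classes modulo $4$ then correspond exactly to the four bijections $f_0,f_1,f_2,f_3$, and the doubly harmonious property of the lifted sequence furnishes (iii) and (iv) simultaneously. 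The delicate point is to verify that a doubly harmonious sequence with the prescribed $\mathbb{Z}_2$-coordinate pattern exists in $\mathbb{Z}_2\times H$.
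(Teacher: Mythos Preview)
Your proposal has a genuine gap. You invoke the cyclic D-sequence from \cite{J} in the first sentence but then never actually use it; the mod-$4$ ansatz that follows is built from four unspecified bijections $f_0,f_1,f_2,f_3$ and makes no reference to ${\bf g}$. The sufficient condition you then isolate (that each of the four pairwise sums be a bijection) is exactly the complete-mapping obstruction, and you correctly note that it fails whenever $H$ has a unique involution. Your fallback---lifting to a doubly harmonious sequence in $\mathbb{Z}_2\times H$ with a prescribed period-$4$ pattern on the $\mathbb{Z}_2$ coordinate---is left explicitly unverified, and Theorem~\ref{abeliandp} gives no control over any such pattern. Producing such a patterned lift is not visibly easier than the lemma itself, so the argument is incomplete precisely in the only nontrivial case.

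The paper's proof is much shorter and uses the D-sequence directly, via a trick you are missing: in an abelian group, alternating signs converts consecutive \emph{differences} into consecutive \emph{sums}. If one sets $k_i=(-1)^i h_i$, then $k_{i-1}+k_i=(-1)^i(h_i-h_{i-1})=(-1)^i\bar h_i$. Starting from a cyclic D-sequence $h_0,\ldots,h_{2m-1}$ with $h_0=h_{2m-1}$, the paper takes ${\bf k}$ to be $h_0,-h_1,h_2,\ldots,-h_{2m-1}$ followed by $h_1,-h_2,\ldots,h_{2m-1},-h_0$. The even- and odd-indexed subsequences of ${\bf k}$ are then signed rearrangements of ${\bf h}$, and the even- and odd-indexed subsequences of $\hat{\bf k}$ are signed rearrangements of $\bar{\bf h}$; all four conditions follow immediately from the fact that ${\bf h}$ and $\bar{\bf h}$ each cover $H$ twice. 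No case split on the $2$-Sylow structure of $H$ is needed.
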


\begin{proof}
    Let $h_0,\ldots, h_{2m-1}$ be a cyclic D-sequence in $H$ such that $h_0=h_{2m-1}$. We let ${\bf k}$ be the following sequence:
    $${\bf k}: h_0, -h_1, h_2, \cdots, -h_{2m-1},h_1, -h_2, \ldots, h_{2m-1}, -h_0.$$
    To be more precise, we let $h_{2m}=h_0$ and define
    $$k_i=\begin{cases}
        (-1)^ih_i & \mbox{if $0\leq i \leq 2m-1$};\\
        (-1)^{i} h_{i-2m+1} & \mbox{if $2m\leq i \leq 4m-1$}.
    \end{cases}$$
The terms $k_0,k_2,\ldots, k_{4m-2}$ consist of $h_0$, $h_2,\ldots$, $h_{2m-2}$ followed by $h_1$, $h_3,\ldots$, $h_{2m-1}$, which include every element of $H$ exactly twice. Similarly, the terms $k_1,k_3,\ldots, k_{4m-1}$ consist of $-h_1$, $-h_3,\ldots$, $-h_{2m-1}$ followed by $-h_2$, $-h_4,\ldots$, $-h_{2m-2},-h_0$, which include every element of $H$ exactly twice as well. 

We have $\{\hat k_1, \hat k_3,\ldots, \hat k_{4m-1}\} = \{-h_i : 0 \leq i \leq 2m-1 \}$, which contains every element of $H$ exactly twice. Similarly, we have $\{\hat k_0, \hat k_2,\ldots, \hat k_{4m-2}\} = \{h_i : 0 \leq i \leq 2m-1\}$, which contains every element of $H$ exactly twice.
\end{proof}

In the next two lemmas, we show that the direct product of a harmonious group and an abelian group is doubly harmonious.

\begin{lemma}\label{dtodp}
    If $G$ is an even harmonious group and $H$ is abelian, then $G \times H$ is doubly harmonious. 
\end{lemma}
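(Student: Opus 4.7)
The plan is to interleave a harmonious sequence $g_0, \ldots, g_{n-1}$ of $G$ (with $n = |G|$ even, so both $\mathbf{g}$ and $\hat{\mathbf{g}}$ are permutations of $G$) with the length-$4m$ sequence $k_0, \ldots, k_{4m-1}$ in $H$ provided by Lemma \ref{dseq} (where $m = |H|$). Since $2|G \times H| = n \cdot 2m$, I would arrange the doubly harmonious sequence as $2m$ consecutive blocks of length $n$, each cycling through the $g_i$'s, while letting the parity of $i$ decide whether the block reads from the even- or odd-indexed entries of $\mathbf{k}$.

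My concrete proposal is to set, for $0 \le i \le n-1$ and $0 \le j \le 2m-1$,
\[
p_{i + nj} = \bigl(g_i,\, k_{2j + (i \bmod 2)}\bigr).
\]
The first step is to check that $\mathbf{p}$ itself covers $G \times H$ exactly twice, which is immediate: with $i$ fixed, the second coordinate runs over $k_0, k_2, \ldots, k_{4m-2}$ for even $i$ or over $k_1, k_3, \ldots, k_{4m-1}$ for odd $i$, each covering $H$ twice by parts (i) and (ii) of Lemma \ref{dseq}.

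The more delicate second step is computing $\hat{\mathbf{p}}$. Within a block (so for $1 \le i \le n-1$), the consecutive indices $i-1$ and $i$ have opposite parity, so $p_{nj+i-1}$ and $p_{nj+i}$ consume the pair $\{k_{2j}, k_{2j+1}\}$ in opposite orders; since $H$ is abelian both orders collapse to $k_{2j} + k_{2j+1} = \hat{k}_{2j+1}$, giving $\hat{p}_{nj+i} = (\hat{g}_i, \hat{k}_{2j+1})$. At the block boundary $i = 0$, evenness of $n$ makes $n-1$ odd, and this is precisely what forces the $k$-index in $p_{(n-1) + n(j-1)}$ to land on $2j-1$, yielding $\hat{p}_{nj} = (\hat{g}_0, \hat{k}_{2j})$; the cyclic wrap at $j = 0$ produces $\hat{p}_0 = (\hat{g}_0, \hat{k}_0)$.

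Combining, for each fixed $i \ge 1$ the multiset $\{\hat{p}_{nj+i} : 0 \le j \le 2m-1\}$ has first coordinate $\hat{g}_i$ and its second coordinates exhaust $\hat{k}_1, \hat{k}_3, \ldots, \hat{k}_{4m-1}$, covered twice by Lemma \ref{dseq}(iv); the $i = 0$ slice similarly exhausts the even-indexed $\hat{k}$'s, covered twice by (iii). Harmoniousness of $\mathbf{g}$ then ensures that $\hat{g}_0, \ldots, \hat{g}_{n-1}$ permutes $G$, so every element of $G \times H$ appears exactly twice in $\hat{\mathbf{p}}$. The only step requiring real care is the parity bookkeeping at the block boundary: the hypothesis that $|G|$ is even is exactly what aligns the $k$-indices so that $\hat{p}_{nj}$ records a genuine $\hat{k}_{\mathrm{even}}$ rather than a sum of two same-parity $k$'s that would fall outside the reach of Lemma \ref{dseq}.
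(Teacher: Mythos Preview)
Your construction $p_{i+nj}=(g_i,k_{2j+(i\bmod 2)})$ is exactly the sequence $\mathbf{u}$ the paper builds, and your verification---parts (i)/(ii) of Lemma~\ref{dseq} for $\mathbf{p}$, the within-block computation $\hat p_{nj+i}=(\hat g_i,\hat k_{2j+1})$ for $1\le i\le n-1$ via abelianness of $H$, and the boundary computation $\hat p_{nj}=(\hat g_0,\hat k_{2j})$ via evenness of $n$---matches the paper's argument step for step. If anything, you are slightly more explicit than the paper about where each hypothesis is invoked.
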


\begin{proof}
   Let ${\bf g}: g_0,\ldots, g_{n-1}$ be a harmonious sequence in $G$ and ${\bf k}:k_0,\ldots, k_{4m-1}$ be the sequence in $H$ obtained in Lemma \ref{dseq}. We define a sequence ${\bf u}: u_0,\ldots$, $u_{2mn-1}$ in $G \times H$ as follows. Given $0\leq i \leq 2mn-1$, we let $i=nj+r$, where $0\leq r \leq n-1$ and $0\leq p \leq 2m-1$. Let
    $$u_i=\begin{cases}
        (g_r, k_{2j}) & \mbox{if $r$ is even;}\\
        (g_r, k_{2j+1}) & \mbox{if $r$ is odd.}
    \end{cases}$$
    
    Hence, 
   \[ {\bf u}:~
\overbrace{\underbrace{\begin{matrix}k_0 \\ g_0\end{matrix}, \begin{matrix}k_1 \\ g_1\end{matrix}, \ldots, \begin{matrix}k_1 \\ g_{n-1}\end{matrix}}_\text{first copy}}^\text{alternating $k_0$ and $k_1$}, \overbrace{\underbrace{ \begin{matrix}k_2 \\ g_0\end{matrix}, \begin{matrix}k_3 \\ g_1\end{matrix}, \ldots, \begin{matrix}k_3 \\ g_{n-1}\end{matrix}}_\text{second copy}}^\text{alternating $k_2$ and $k_3$}, \ldots,\overbrace{\underbrace{\begin{matrix}k_{4m-2} \\ g_0 \end{matrix},\begin{matrix}k_{4m-1}\\ g_1\end{matrix}, \ldots, \begin{matrix}k_{4m-1} \\ g_{n-1}\end{matrix}}_\text{$2m$'th copy}}^\text{alternating $k_{4m-2}$ and $k_{4m-1}$}.
\]

    Given $g \in G$, let $r\in \{0,1,\ldots, n-1\}$ be the unique index such that $g=g_r$. If $r$ is even, the terms $u_{nj+r}=(g_r,k_{2j})$, $0 \leq j \leq 2m-1$, contain every element of $\{g\} \times H$ exactly twice, while if $r$ is odd the terms $u_{nj+r}=(g_r,k_{2j+1})$, $0\leq j \leq 2m-1$, contain every element of $\{g\} \times H$ exactly twice. 

   Next, we show that every element of $G\times H$ appears twice in ${\bf \hat u}$. Given $g\in G$, let $r \in \{0,1,\ldots, n-1\}$ be the unique index such that $g=\hat g_r$. First suppose that $r$ is odd. For $0\leq j \leq 2m-1$, we have
   
   $$\hat{u}_{nj+r}=u_{nj+r-1}u_{nj+r}=(g_{r-1},k_{2j})(g_r,k_{2j+1})=(g, \hat k_{2j+1}).$$
   
   Since every element of $H$ appears twice among the sums $\hat k_{2j+1}$ for $0\leq j\leq 2m-1$, we conclude that the terms $\hat u_{nj+r}$, $0\leq j \leq 2m-1$, contain every element of $\{g\} \times H$ exactly twice. The proof is similar when $r$ is a nonzero even index. If $r=0$, one has
   $$\hat{u}_{nj}=u_{n(j-1)+n-1}u_{nj}=(g_{n-1},k_{2j-1})(g_0,k_{2j})=(g,\hat k_{2j}),$$
    so the terms $\hat u_{nj}$, $0\leq p \leq 2m-1$, contain every element of $\{g\} \times H$ twice as well. Thus ${\bf u}$ is a doubly harmonious sequence in $G \times H$.
\end{proof}

\begin{lemma}\label{extodd2}
If $G$ is an odd group and $H$ is a doubly harmonious group, then $G \times H$ is doubly harmonious. 
\end{lemma}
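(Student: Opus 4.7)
My plan is to generalize the construction of Lemma~\ref{dtodp}, exploiting the odd order of $G$ to bypass the need for an auxiliary $\mathbf{k}$-sequence. Since every odd group is harmonious (as noted in the introduction), fix a harmonious sequence $\mathbf{g}: g_0, g_1, \ldots, g_{n-1}$ in $G$, where $n = |G|$, and fix a doubly harmonious sequence $\mathbf{h}: h_0, h_1, \ldots, h_{2m-1}$ in $H$, where $m = |H|$. The proposed sequence $\mathbf{u}$ in $G \times H$ has length $2mn$ and is defined in $n$ blocks of length $2m$: for $0 \leq r \leq n-1$ and $0 \leq s \leq 2m-1$, set
\[
u_{2mr+s} = (g_r, h_s),
\]
so the $r$-th block freezes the first coordinate at $g_r$ and lets the second coordinate run through $\mathbf{h}$.

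The first required property is immediate: for each $(g_r, h) \in G \times H$, this element appears in $\mathbf{u}$ once for each index $s$ with $h_s = h$, and there are exactly two such indices because $\mathbf{h}$ is doubly harmonious.

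For the second property I analyze $\hat u_i$ in two cases. Within a block, i.e.\ $i = 2mr + s$ with $1 \leq s \leq 2m-1$, the entries $u_{i-1}$ and $u_i$ share the first coordinate $g_r$, so $\hat u_i = (g_r^2, \hat h_s)$. At a block boundary, i.e.\ $i = 2mr$ (indices on $g$ read modulo $n$), one has $\hat u_{2mr} = (g_{r-1} g_r, h_{2m-1} h_0) = (\hat g_r, \hat h_0)$. For each fixed $s$ with $1 \leq s \leq 2m-1$, as $r$ varies the values $g_r^2$ exhaust $G$ because squaring is a bijection on any group whose order is coprime to $2$, so $\{\hat u_{2mr+s} : 0 \leq r \leq n-1\} = G \times \{\hat h_s\}$. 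For $s = 0$, the values $\hat g_r$ exhaust $G$ because $\mathbf{g}$ is harmonious, giving $\{\hat u_{2mr} : 0 \leq r \leq n-1\} = G \times \{\hat h_0\}$. Combining these with the fact that each element of $H$ appears exactly twice among $\hat h_0, \ldots, \hat h_{2m-1}$, every element of $G \times H$ appears exactly twice in $\hat{\mathbf{u}}$, confirming that $\mathbf{u}$ is doubly harmonious.

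The only real obstacle is arranging the ``within-block'' products to cover each fiber $G \times \{\hat h_s\}$, and this is delivered by the bijectivity of $g \mapsto g^2$ on odd-order $G$, which follows from $\gcd(2, |G|) = 1$. In the even harmonious case of Lemma~\ref{dtodp} this bijectivity fails, which is why that argument required the more elaborate auxiliary sequence from Lemma~\ref{dseq}; the oddness of $G$ here lets us dispense with that machinery entirely.
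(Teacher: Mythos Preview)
Your construction is identical to the paper's: both form $n$ blocks of length $2m$ with first coordinate frozen at $g_r$ and second coordinate running through the doubly harmonious sequence in $H$, and the paper's terse ``It is easily checked'' is precisely the squaring-bijection argument you spell out. Your proof is correct and essentially the same approach, just with the verification made explicit.
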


\begin{proof}
Let ${\bf h}: h_0,\ldots, h_{2n-1}$ be a doubly harmonious sequence in $H$ and ${\bf g}: g_0,\ldots, g_{m-1}$ be a harmonious sequence in $G$. We define a doubly harmonious sequence ${\bf u}:u_0,\ldots, u_{2mn-1}$, in $G \times H$ as follows. Given $0\leq i \leq 2mn-1$, we write $i=2jn+r$, where $0\leq j \leq m-1$ and $0\leq r \leq 2n-1$, and let
\[ u_i= (g_j, h_r). 
\]
That is,
 \[ {\bf u}:~
\underbrace{\begin{matrix}h_0 \\ g_0\end{matrix}, \begin{matrix}h_1 \\ g_0\end{matrix}, \ldots, \begin{matrix}h_{2n-1} \\ g_0\end{matrix}}_\text{all $g_0$}, \underbrace{ \begin{matrix}h_0 \\ g_1\end{matrix}, \begin{matrix}h_1 \\ g_1\end{matrix}, \ldots, \begin{matrix}h_{2n-1} \\ g_1\end{matrix}}_\text{all $g_1$}, \ldots,\underbrace{\begin{matrix}h_0 \\ g_{m-1}\end{matrix},\begin{matrix}h_1 \\ g_{m-1}\end{matrix}, \ldots, \begin{matrix}h_{2n-1} \\ g_{m-1}\end{matrix}}_\text{all $g_{m-1}$}.
\]
It is easily checked that ${\bf u}$ is a doubly harmonious sequence in $G\times H$.
\end{proof}

The following theorem follows from Lemmas \ref{dtodp} and \ref{extodd2}. 
\begin{theorem}\label{dhom}
    If $G$ is harmonious and $H$ is abelian, then $G \times H$ is doubly harmonious.
\end{theorem}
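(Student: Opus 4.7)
The plan is a two-case split according to the parity of $|G|$, with the two lemmas immediately preceding the theorem doing essentially all the work.

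First I would observe that if $|G|$ is even, then Lemma \ref{dtodp} applies verbatim: an even harmonious group $G$ paired with an abelian $H$ yields a doubly harmonious $G \times H$. So nothing further needs to be said in the even case.

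The odd case requires one short bridge. If $|G|$ is odd, then $G$ is an odd group, so Lemma \ref{extodd2} will give the conclusion provided $H$ itself is doubly harmonious. But $H$ is abelian, and Theorem \ref{abeliandp} states that every finite abelian group admits a doubly harmonious sequence. Thus $H$ is doubly harmonious, and applying Lemma \ref{extodd2} with the roles played by $G$ (odd) and $H$ (doubly harmonious) yields that $G \times H$ is doubly harmonious.

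There is essentially no obstacle here, since both lemmas have already been proved and Theorem \ref{abeliandp} is available; the only thing to notice is that the two lemmas together cover all harmonious $G$ (even via \ref{dtodp}, odd via \ref{extodd2} combined with \ref{abeliandp}). The one subtlety worth flagging is that the roles of $G$ and $H$ swap between the two lemmas in the odd case: Lemma \ref{extodd2} demands its second factor be doubly harmonious rather than merely harmonious, and that is precisely why we need Theorem \ref{abeliandp} as an intermediary to upgrade the abelian hypothesis on $H$. Once this is pointed out, the proof is a one-line case analysis.
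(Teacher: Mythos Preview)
Your proposal is correct and matches the paper's own proof essentially verbatim: split on the parity of $|G|$, invoke Lemma~\ref{dtodp} in the even case, and in the odd case combine Theorem~\ref{abeliandp} (to get $H$ doubly harmonious) with Lemma~\ref{extodd2}. The observation you flag about the role reversal in the odd case is exactly the point the paper handles implicitly.
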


\begin{proof}
    If $G$ is an even group, the claim follows directly from Lemma \ref{dtodp}. If $G$ is odd, the claim follows from Lemma \ref{extodd2}, since every abelian group is doubly harmonious by Theorem \ref{abeliandp}.
\end{proof}

By Theorem \ref{dhom}, if $G$ is an odd group, then $G \times \mathbb{Z}_{2^m}$ is doubly harmonious. These groups provide nontrivial examples of non-harmonious but doubly harmonious groups supporting the conjecture that all finite groups are doubly harmonious. The following result on Latin squares is an immediate consequence of Theorem \ref{dhom}.

\begin{cor}
    If $G$ is an odd group and $H$ is abelian, then $L(G\times H)$ admits a cyclic duplex. 
\end{cor}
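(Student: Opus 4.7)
The plan is essentially to assemble ingredients already laid out in the paper: odd groups are harmonious (stated in the introduction as a known fact), so the hypotheses of Theorem \ref{dhom} are satisfied, and therefore $G \times H$ is doubly harmonious. All that remains is to spell out the correspondence between a doubly harmonious sequence and a cyclic duplex in the multiplication table, which the authors already alluded to in the sentence ``Every doubly harmonious sequence in $G$ gives rise to a cyclic duplex in $L(G)$.''

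More concretely, I would proceed as follows. Let $K = G \times H$ and $N = |K|$. By Theorem \ref{dhom}, there exists a doubly harmonious sequence ${\bf u}: u_0, u_1, \ldots, u_{2N-1}$ in $K$; that is, both ${\bf u}$ and ${\bf \hat u}$ contain each element of $K$ exactly twice, where $\hat u_0 = u_{2N-1} u_0$ and $\hat u_i = u_{i-1}u_i$ for $1 \leq i \leq 2N - 1$. Consider the collection of cells
\[
\mathcal{T} = \bigl\{ c_i = (u_{i-1}, u_i) : 1 \leq i \leq 2N \bigr\}
\]
in $L(K)$, where indices are read modulo $2N$ so that $u_{2N} = u_0$. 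Then $\mathcal{T}$ has $2N$ cells, and by the definition of $L(K)$ the symbol occupying cell $c_i$ is $u_{i-1}u_i = \hat u_i$ (or $\hat u_0$ when $i = 2N$).

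I would then verify the three duplex conditions and the cyclic condition. The row indices of the cells in $\mathcal{T}$ are $u_0, u_1, \ldots, u_{2N-1}$, which by hypothesis cover each element of $K$ exactly twice; the column indices are $u_1, u_2, \ldots, u_{2N}$, which is the same multiset and therefore also covers each element twice; and the symbols are $\hat u_1, \hat u_2, \ldots, \hat u_{2N-1}, \hat u_0$, which by the doubly harmonious property cover each element exactly twice. Thus $\mathcal{T}$ is a duplex. Finally, the listing $c_1, c_2, \ldots, c_{2N}$ witnesses cyclicity because the column index of $c_i$ is $u_i$, which is precisely the row index of $c_{i+1}$ (and the column index of $c_{2N}$ is $u_0$, the row index of $c_1$).

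There is no real obstacle here, since the heavy lifting is done by Theorem \ref{dhom}; the only thing to be slightly careful about is the indexing convention for $\hat u$, making sure the wrap-around cell $c_{2N} = (u_{2N-1}, u_0)$ carries symbol $\hat u_0 = u_{2N-1}u_0$ so that the multiset of symbols is genuinely $\{\hat u_i : 0 \leq i \leq 2N-1\}$. Once this bookkeeping is in place, the corollary follows immediately.
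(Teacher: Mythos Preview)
Your proposal is correct and takes essentially the same approach as the paper: the paper records the corollary as ``an immediate consequence of Theorem~\ref{dhom}'' together with the earlier remark that every doubly harmonious sequence yields a cyclic duplex in $L(G)$, and you simply spell out that correspondence explicitly. The only extra content you supply is the bookkeeping on rows, columns, symbols, and cyclicity, which matches the paper's intended (but unwritten) argument.
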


In the rest of this section, we show that if $G$ is an abelian group of order at least 4, then $G^\sharp$ is doubly harmonious, where $G^\sharp=G\backslash \{0_G\}$. We say that a doubly harmonious sequence $g_0,g_1,\ldots, g_{m}$ in $G^\sharp$ is {\it special} if $g_0+g_{m-1}=0$ and $g_{m-2}+g_{m-1}=g_m$, or equivalently, if $g_{m-2}=\hat{g}_0$ and $g_m=\hat{g}_{m-1}$. For example, the following sequence is a special doubly harmonious sequence in $\mathbb{Z}^\sharp_{10}$:
$$1, 1, 2, 2, 3, 3, 4, 4, 5, 6,5,7, 6, 8, 7, 9,9,8,$$
and the following sequence is such in $\mathbb{Z}^\sharp_{11}$:
$$1,1,2,2,3,3,4,4,5,5,7,6,6,8,7,9,8,10,10,9.$$
The next lemma shows that these patterns generalize to $\mathbb{Z}_n$ for all integers $n>3$.

\begin{lemma}\label{drplem}
There exists a special doubly harmonious sequence in $\mathbb{Z}_n^\sharp$ for every $n>3$.
\end{lemma}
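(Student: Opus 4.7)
The plan is to give an explicit construction that generalizes the two worked examples immediately preceding the lemma, with the cases split according to the parity of $n$. Writing $m = 2n-3$, a candidate sequence in $\mathbb{Z}_n^\sharp$ should have the following coarse shape: a ``first half'' of length roughly $n-1$ built from consecutive doubles $1,1,2,2,3,3,\ldots$, followed by a ``second half'' using the larger values $\lfloor n/2\rfloor+1,\ldots,n-1$ in an interleaved order that is carefully engineered to make the consecutive sums work out. The very small cases $n=4,5,6,7$ I would dispose of by exhibiting explicit sequences (as done in the text for $n=10,11$), so the bulk of the work reduces to two parametric families.

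For $n$ odd, writing $n=2k+1$, I would set $g_{2i}=g_{2i+1}=i+1$ for $0\le i\le k-1$, filling positions $0,\ldots,2k-1$ with $1,1,2,2,\ldots,k,k$. This immediately yields $\hat g_{2i+1}=2(i+1)$ and $\hat g_{2i+2}=2i+3$, so the sums in the first block run through the consecutive integers $2,3,\ldots,2k=n-1$ exactly once, missing only $1$. The second half, occupying positions $2k,\ldots,m$, I would define explicitly by a ``shifted'' interleaved formula modeled on the $n=11$ case, using each of $k+1,\ldots,n-1$ twice and ending with $g_{m-2}, g_{m-1}, g_m$ chosen so that $g_{m-1}=-g_0=n-1$ and $g_{m-2}+g_{m-1}=g_m\pmod n$. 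For $n$ even, writing $n=2k$, the first block has length $2k-2$ and the second half uses $k,k+1,\ldots,n-1$ in a nearly identical interleaving pattern (as seen for $n=10$), with a one-term adjustment in the middle to compensate for the missing central double.

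In each case I would verify four things: (a) every element of $\mathbb{Z}_n^\sharp$ appears twice in $\mathbf{g}$, which is transparent from the formulas for each block; (b) the special conditions $g_0+g_{m-1}=0$ and $g_{m-2}+g_{m-1}=g_m$, which are direct arithmetic checks on the last three terms; and (c)--(d) that the consecutive sums $\hat g_0,\ldots,\hat g_m$ cover each nonzero element of $\mathbb{Z}_n$ exactly twice. The main obstacle is (c)--(d): once the first block contributes the sums $2,3,\ldots,n-1$, I must show that the wraparound sum $\hat g_0 = g_m+g_0$ together with the $n$ sums coming from the second half produces precisely the multiset consisting of $1$ twice and $2,3,\ldots,n-1$ once more. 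Concretely, splitting indices in the second half by parity modulo $2$ and reducing modulo $n$, the even-indexed sums and the odd-indexed sums each form an arithmetic-like progression that can be shown to hit the required residues without collision. Keeping track of when these progressions wrap around modulo $n$—and verifying that the wraparound accounts exactly for the ``extra'' copy of $1$—is the delicate bookkeeping that the proof will spend most of its effort on.
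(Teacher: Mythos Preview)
Your plan is essentially the paper's own proof: it too splits on the parity of $n$, builds the first half from the doubled run $1,1,2,2,\ldots$ and the second half from an interleaved pattern on the larger residues, exactly generalizing the $n=10$ and $n=11$ examples, and then declares the verification ``straightforward.'' The only differences are expository: the paper writes down one closed formula per parity that is asserted to work for all $n>3$ without isolating small cases, whereas you propose to peel off $n\le 7$ by hand and to spell out the bookkeeping for the second-half sums more carefully.
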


\begin{proof}
First, let $n=2k$ where $k$ is an integer greater than 1. Let ${\bf s}_1$ and ${\bf s}_2$ be the following sequences:
\begin{eqnarray}\nonumber
    {\bf s}_1:& \underbrace{1,1,2,2, \ldots, k-1,k-1}_{\mbox{pairs $i,i$ for $1\leq i \leq k-1$}},k,k+1;\\ \nonumber 
    {\bf s}_2:& \underbrace{k, k+2, k+1,\ldots, 2k-3,2k-1}_{\mbox{pairs $i,i+2$ for $k\leq i \leq 2k-3$}},2k-1,2k-2.
\end{eqnarray}
Next, let $n=2k-1$ where $k$ is an integer greater than 2. Let ${\bf t}_1$ and ${\bf t}_2$ be the following sequences:
\begin{eqnarray}\nonumber
    {\bf t}_1:& \underbrace{1,1,2,2, \ldots, k-1,k-1}_{\mbox{pairs $i,i$ for $1\leq i \leq k$}},k+1,k;\\ \nonumber 
    {\bf t}_2:&\underbrace{k, k+2, k+1, \ldots, 2k-4,2k-2}_{\mbox{pairs $i,i+2$ for $k\leq i \leq 2k-4$}},2k-2,2k-3.
\end{eqnarray}
It is then straightforward to check that the sequence obtained by joining ${\bf s}_2$ to the end of ${\bf s}_1$ is a special doubly harmonious sequence in $\mathbb{Z}^\sharp_{2k}$, and similarly, the sequence obtained by joining ${\bf t}_2$ to the end of ${\bf t}_1$ is a special doubly harmonious sequence in $\mathbb{Z}^\sharp_{2k-1}$.
\end{proof}

\begin{lemma}\label{drpinduct}
Suppose that $G^\sharp$ admits a special doubly harmonious sequence. Then $\left (G\times \mathbb{Z}_m \right )^\sharp$ admits a special doubly harmonious sequence for every odd integer $m$.
\end{lemma}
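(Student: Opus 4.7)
Let ${\bf g}: g_0, g_1, \ldots, g_M$ be the given special doubly harmonious sequence in $G^\sharp$, where $M = 2|G|-3$. By hypothesis, $g_0 + g_{M-1} = 0_G$ and $g_M = g_{M-2} + g_{M-1}$. The plan is to construct explicitly a special doubly harmonious sequence ${\bf u}$ in $(G \times \mathbb{Z}_m)^\sharp$ of length $2(|G|m - 1) = m(M+1) + 2(m-1)$. The sequence ${\bf u}$ will consist of $m$ ``blocks'' (each a copy of ${\bf g}$ equipped with a chosen assignment of $\mathbb{Z}_m$-coordinates), joined by $2(m-1)$ inserted elements of the form $(0_G, z)$ with $z \in \mathbb{Z}_m^\sharp$; the multiplicities match since each $(0_G,z)$ with $z\neq 0$ is to appear twice in $\bf u$.

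First I would assign the $\mathbb{Z}_m$-coordinates within the blocks following an alternating scheme analogous to the one in the proof of Theorem \ref{abelianpseq}: for $0 \leq j \leq m-1$ and $0 \leq r \leq M$, the $r$-th element of the $j$-th block is $(g_r, j + \epsilon_r)$, where $\epsilon_r \in \{0, 1\}$ depends on the parity of $r$, and the two values are aligned so that the copy-to-copy transition is consistent. Since $m$ is odd and each element of $G^\sharp$ appears twice in ${\bf g}$, a routine counting argument (as in Theorem \ref{abelianpseq}) shows that every $(g, z) \in G^\sharp \times \mathbb{Z}_m$ occurs exactly twice both in the block part of ${\bf u}$ and among the block-generated consecutive sums $\hat u_i$.

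The central obstacle is to arrange the $2(m-1)$ insertions so that every $(0_G, z)$ with $z\neq 0$ also appears exactly twice in $\hat{\bf u}$. Consecutive pairs of inserted $(0_G, \cdot)$ elements contribute $(0_G, \cdot)$ sums, but a count shows that pair-insertions alone produce only $m-1$ such sums, whereas $2(m-1)$ are needed. The remaining $(0_G, \cdot)$ sums will come from exploiting the identity $g_0 + g_{M-1} = 0_G$: by cyclically rotating $m-1$ of the blocks so that each rotated block ends at $g_{M-1}$ and is followed directly by a normal block starting at $g_0$, the boundary sum at each such junction has $G$-coordinate $g_{M-1}+g_0 = 0_G$, producing an additional $(0_G, \cdot)$ sum. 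Combining these $m-1$ ``boundary'' $(0_G, \cdot)$ sums with the $m-1$ ``inside-insertion'' $(0_G, \cdot)$ sums yields the required $2(m-1)$. The $\mathbb{Z}_m$-coordinates of the insertions and the rotation offsets are then chosen so that each $(0_G, z)$, $z \in \mathbb{Z}_m^\sharp$, appears exactly twice; the accounting for non-zero $G$-coordinates has to be re-balanced to absorb the shift in within-block sums caused by the rotations, using that rotating a block replaces the sum $\hat g_M$ by $g_{M-2}=\hat g_0$, which can be offset by a symmetric choice of insertion positions.

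Finally, to inherit the special condition, I would arrange the last block so that its terminal triple corresponds to $g_{M-2}, g_{M-1}, g_M$ with matched $\mathbb{Z}_m$-coordinates, and choose $u_0$ so that $u_0 + u_{M'-1} = 0$, where $M' = 2|G|m - 3$; the identity $g_M = g_{M-2}+g_{M-1}$ then lifts to the corresponding identity for ${\bf u}$. The hardest part will be the combinatorial bookkeeping: simultaneously coordinating (i) the $\mathbb{Z}_m$-coordinate pattern in the blocks, (ii) the positions and coordinates of the inserted pairs, (iii) the block rotations that implement the $g_0 + g_{M-1} = 0_G$ trick, and (iv) the resulting compensations, so that every one of the $|G|m - 1$ non-identity elements of $G\times\mathbb{Z}_m$ is hit exactly twice in both ${\bf u}$ and $\hat{\bf u}$ and the special condition holds.
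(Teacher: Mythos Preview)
Your plan has the right ingredients (alternating $\mathbb{Z}_m$-coordinates on copies of ${\bf g}$, inserting the $(0_G,z)$'s, and using $g_0+g_{M-1}=0_G$ to manufacture the missing $(0_G,\cdot)$ sums), but the mechanism you describe is internally inconsistent. You place the $2(m-1)$ insertions as pairs between consecutive blocks, so no block is ever ``followed directly'' by the next block; yet you also claim $m-1$ boundary sums of the form $g_{M-1}+g_0$ from rotated-block/normal-block junctions. These two devices cannot operate at the same junction. Worse, even without the insertions, rotating $m-1$ blocks does not yield $m-1$ such boundaries: a block rotated to end at $g_{M-1}$ begins at $g_M$, so a rotated block followed by another rotated block gives boundary $G$-component $g_{M-1}+g_M=\hat g_M$, not $0_G$. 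With only one unrotated block you get at most one (cyclically, two) boundary of the desired type, not $m-1$.

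The paper's construction fixes exactly this point, and it uses \emph{both} special identities, not just $g_0+g_{M-1}=0$. For each of the first $m-1$ blocks it truncates the alternating part to $g_0,\ldots,g_{n-2}$ and then appends the four elements $(0_G,i),(0_G,i),(g_n,i-2),(g_{n-1},i-2)$, so the block genuinely ends with $g_{n-1}$ and the next block begins with $g_0$: the junction sum has $G$-component $0_G$, and the internal $(0_G,i)+(0_G,i)$ supplies the second copy of $(0_G,2i)$. The price is that the sums $\hat g_{n-1}$ and $\hat g_0$ are no longer produced inside the alternating part; this is where $g_{n-2}=\hat g_0$ and $g_n=\hat g_{n-1}$ are used, since the transitions $g_{n-2}\to 0_G$ and $0_G\to g_n$ recover $(\hat g_0,\cdot)$ and $(\hat g_{n-1},\cdot)$ with the right $\mathbb{Z}_m$-coordinates. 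A single final block carrying the full $g_0,\ldots,g_n$ (with its tail at coordinate $m-2$) then supplies the special condition for the product sequence. Your ``rotation plus symmetric compensation'' sketch does not reach this; the workable move is to reorder the tail and embed the $(0_G,i)$ pair \emph{inside} each block rather than between blocks.
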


\begin{proof}
Let ${\bf g}: g_0,\ldots, g_n$ be a special doubly harmonious sequence in $G^\sharp$. For $1\leq i \leq m-1$, let $\boldsymbol\ell_i$ be the following sequence in $\left ( G\times \mathbb{Z}_m \right )^\sharp$, where $\genfrac{}{}{0pt}{1}{j}{g}$ denotes the element $(g,j) \in G \times \mathbb{Z}_m$, in which $j$ is computed modulo $m$.
\[\boldsymbol\ell_i:~\overbrace{\begin{matrix}i+1 \\ g_0\end{matrix},\begin{matrix}i \\ g_1\end{matrix}, \ldots, \begin{matrix}i+1 \\ g_{n-3}\end{matrix}, \begin{matrix}i \\ g_{n-2}\end{matrix}}^{\mbox{alternating $i+1,i$}},\begin{matrix}i \\ 1_G\end{matrix},\begin{matrix}i \\ 1_G\end{matrix},\begin{matrix}i-2 \\ g_n\end{matrix}, \begin{matrix}i-2 \\ g_{n-1}\end{matrix}.\]
We also let
\[\boldsymbol\ell:~\overbrace{\begin{matrix}1 \\ g_0\end{matrix}, \begin{matrix}0 \\ g_1\end{matrix} \ldots, \begin{matrix}1 \\ g_{n-3}\end{matrix}, \begin{matrix}0 \\ g_{n-2}\end{matrix}}^{\mbox{alternating $1,0$}},\begin{matrix}m-2 \\ g_{n-1}\end{matrix}, \begin{matrix}m-2 \\ g_n\end{matrix}.\]
Then the sequence obtained by joining $\boldsymbol\ell_1,\boldsymbol\ell_2,\ldots,\boldsymbol\ell_{m-1},\boldsymbol\ell$ is a special doubly harmonious sequence in $\left (G\times \mathbb{Z}_m \right )^\sharp$.
\end{proof}

Now, we are ready to prove the main theorem of this section. 

\begin{theorem}\label{abel-drp}
Let $G$ be a finite abelian group of order at least 4. Then $G^\sharp$ is doubly harmonious. 
\end{theorem}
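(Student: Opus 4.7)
The plan is to reduce to Lemma \ref{drplem} and Lemma \ref{drpinduct}, together with the fact from \cite{JG} cited in the introduction: for every abelian group $G$ with $|G|>3$, the set $G^\sharp$ is harmonious if and only if $G$ has no unique involution. I would split according to whether the $2$-Sylow subgroup $G_2$ of $G$ is cyclic and nontrivial (equivalently, $G$ has a unique involution) or not. If $G_2$ is trivial or non-cyclic, then $G$ has no unique involution and the cited result provides a harmonious sequence in $G^\sharp$; concatenating this sequence with itself yields a doubly harmonious sequence in $G^\sharp$.

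In the remaining case, $G_2 \cong \mathbb{Z}_{2^c}$ for some $c \geq 1$ and the primary decomposition gives $G = \mathbb{Z}_{2^c} \times Q$ with $|Q|$ odd. The aim is to exhibit an isomorphism $G \cong \mathbb{Z}_n \times \mathbb{Z}_{m_1} \times \cdots \times \mathbb{Z}_{m_t}$ with $n\geq 4$ even and each $m_i$ odd; then Lemma \ref{drplem} supplies a special doubly harmonious sequence in $\mathbb{Z}_n^\sharp$, and $t$ successive applications of Lemma \ref{drpinduct} extend it to a special (in particular, doubly) harmonious sequence in $G^\sharp$. When $c\geq 2$, we simply take $n=2^c$ and let the $m_i$'s come from any decomposition of $Q$ into odd cyclic factors. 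When $c=1$, the hypothesis $|G|\geq 4$ forces $|Q|\geq 3$, so $Q$ has an odd cyclic direct summand $\mathbb{Z}_{q_1}$ with $q_1\geq 3$; writing $Q = \mathbb{Z}_{q_1} \times Q'$ and invoking the Chinese Remainder Theorem to identify $\mathbb{Z}_2 \times \mathbb{Z}_{q_1}$ with $\mathbb{Z}_{2q_1}$, we take $n=2q_1\geq 6$ and draw the $m_i$'s from a decomposition of $Q'$ into odd cyclic factors.

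The only subtle point is the sub-case $c=1$: since Lemma \ref{drpinduct} only adjoins odd cyclic factors, a bare $\mathbb{Z}_2$ factor cannot be absorbed on its own. The Chinese Remainder Theorem trick fuses it with an odd cyclic factor of $Q$ to produce a cyclic group of even order at least $6$, bringing the situation into the scope of Lemma \ref{drplem}. The condition $|G|\geq 4$ is precisely what guarantees the existence of such an odd factor of $Q$ when $c=1$, which closes the argument.
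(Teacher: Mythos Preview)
Your proof is correct and follows essentially the same approach as the paper: both split on whether $G$ has a unique involution, invoke \cite{JG} to handle the case with no unique involution, and in the remaining case use Lemma~\ref{drplem} on a cyclic factor of even order at least $4$ followed by repeated applications of Lemma~\ref{drpinduct}. The only cosmetic difference is that the paper always fuses the even prime-power factor $\mathbb{Z}_{n_1}$ with an odd $\mathbb{Z}_{n_2}$ whenever a second factor exists, whereas you only perform the CRT fusion in the sub-case $c=1$; both organizations are fine.
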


\begin{proof}
  Let $G \cong \mathbb{Z}_{n_1}\times \cdots \times \mathbb{Z}_{n_k}$ be an abelian group, where each $n_i$ is a prime power, $1\leq i \leq k$. If the 2-Sylow subgroup of $G$ is trivial or non-cyclic, then $G^\sharp$ is harmonious (since also $|G|>3$), hence doubly harmonious. Thus, without loss of generality, suppose that $n_1$ is even and $n_i$ is odd for all $2\leq i \leq k$. If $k=1$ then $n_1> 3$ and $G^\sharp \cong \mathbb{Z}_{n_1}^ \sharp$ has a doubly harmonious sequence by Lemma \ref{drplem}. If $k>1$, then $(\mathbb{Z}_{n_1} \times \mathbb{Z}_{n_2})^\sharp=\mathbb{Z}^\sharp_{n_1 n_2}$ has a special doubly harmonious sequence by Lemma \ref{drplem}. It then follows from Lemma \ref{drpinduct} and a simple induction that $G^\sharp$ is doubly harmonious. 
\end{proof}


\section{Groups with a unique involution } \label{cpgrps}

In this section, we show that if $G$ is an abelian group with a unique involution $\imath_G$ then $G^\flat=G\backslash \{\imath_G\}$ is harmonious. We also show that if, moreover, $G$ has order at least 10, then $G^\natural=G \backslash \{\imath_G,0_G\}$ is also harmonious. 

\begin{lemma}\label{CPpowersof2}
    For every positive integer $m$, $\left (\mathbb{Z}_{2^m} \right )^\flat$ is harmonious.
\end{lemma}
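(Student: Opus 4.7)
My plan is to prove the lemma by induction on $m$. The base cases $m = 1$ and $m = 2$ are immediate: the singleton $(0)$ is harmonious in $\left(\mathbb{Z}_2\right)^\flat = \{0\}$, and the sequence $(0, 1, 3)$ in $\left(\mathbb{Z}_4\right)^\flat = \{0, 1, 3\}$ has cyclic sums $(1, 0, 3)$, which again permute $\left(\mathbb{Z}_4\right)^\flat$.

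For the inductive step, assume $\mathbf{h} = (h_0, h_1, \ldots, h_{n-1})$ is a harmonious sequence in $\left(\mathbb{Z}_{2^m}\right)^\flat$ where $n = 2^m - 1$. The goal is to construct a harmonious sequence in $\left(\mathbb{Z}_{2^{m+1}}\right)^\flat$ via a doubling-and-insertion scheme. Consider the doubling map $\phi \colon \mathbb{Z}_{2^m} \to \mathbb{Z}_{2^{m+1}}$ defined by $\phi(x) = 2x$; this is an injective group homomorphism sending the involution $2^{m-1}$ of $\mathbb{Z}_{2^m}$ to the involution $2^m$ of $\mathbb{Z}_{2^{m+1}}$, and so it restricts to a bijection from $\left(\mathbb{Z}_{2^m}\right)^\flat$ onto the set $E$ of even non-involution elements of $\left(\mathbb{Z}_{2^{m+1}}\right)^\flat$. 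Consequently, the doubled sequence $(2h_0, 2h_1, \ldots, 2h_{n-1})$ covers $E$, and its cyclic sums $2(h_i + h_{i+1}) \pmod{2^{m+1}}$ also cover $E$.

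The remaining step is to insert the $2^m$ odd elements of $\mathbb{Z}_{2^{m+1}}$ into the $n$ gaps of the doubled sequence in a carefully chosen pattern. A parity count shows that insertions must occupy exactly $2^{m-1}$ of the $n$ gaps (each such gap receiving at least one odd element, with total insertion length $2^m$), yielding $2 \cdot 2^{m-1} = 2^m$ "boundary" odd sums and $(2^{m-1} - 1) + (2^m - 2^{m-1}) = 2^m - 1$ even sums (from the $2^{m-1} - 1$ unfilled gaps and the interior of filled gaps), matching the required counts for $\left(\mathbb{Z}_{2^{m+1}}\right)^\flat$ exactly. The insertion must be designed so that (i) the $2^m$ boundary odd sums are distinct and cover the odd part of $\left(\mathbb{Z}_{2^{m+1}}\right)^\flat$, (ii) no two consecutive odd elements within a block sum to $2^m$, and (iii) the interior even sums together with the unfilled gap sums cover $E$ without repetition.

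The main obstacle is producing an insertion pattern that satisfies (i)--(iii) uniformly in $m$. I expect this to require strengthening the inductive hypothesis, demanding that $\mathbf{h}$ carry an extra structural property (in the spirit of the "special" condition of Lemma \ref{drplem}), so that the placement of the odd elements is prescribed by a concrete recipe in terms of the $h_i$ and the three conditions can be verified for every $m$.
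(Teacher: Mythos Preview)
Your proposal is not a proof but a plan with an explicitly acknowledged gap: you never produce the insertion pattern satisfying conditions (i)--(iii), and you yourself flag that this ``main obstacle'' may require a strengthened inductive hypothesis you have not formulated. Until that pattern is written down and verified, there is no inductive step, hence no proof. The parity bookkeeping you carry out is correct and shows the scheme is at least numerically consistent, but it does not guarantee that a compatible insertion exists, let alone one that propagates whatever extra structure a strengthened hypothesis would demand.

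By contrast, the paper bypasses induction entirely with a direct explicit construction. Writing $k=2^{m-1}$, the sequence
\[
g_i=\begin{cases} i/2 & \text{$i$ even},\\ k+(i+1)/2 & \text{$i$ odd},\end{cases}
\qquad 0\le i\le 2k-2,
\]
lists $0,k+1,1,k+2,2,\ldots,2k-1,k-1$, and one checks in one line that $\hat g_i=k+i$ for $1\le i\le 2k-2$ and $\hat g_0=k-1$, so both $\mathbf g$ and $\hat{\mathbf g}$ enumerate $\mathbb{Z}_{2k}\setminus\{k\}$. This is far simpler than any doubling-and-insertion recursion, and it suggests that if you wish to complete your approach you should look for a closed-form pattern rather than a genuinely inductive one; in fact, the paper's sequence is exactly the kind of interleaving of ``small'' and ``large'' residues that your insertion idea is groping toward, but discovered all at once rather than one bit of $m$ at a time.
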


\begin{proof}
    Let $k=2^{m-1}$. Then the sequence ${\bf g} : g_0, g_1, \ldots, g_{2k-2}$ given by
    \[ 
  0, k+1, 1, k+2, 2, \ldots,  2k-1, k-1
    \]
    is a harmonious sequence in $\left (\mathbb{Z}_{2^m} \right )^\flat$. More precisely, let
    $$g_i=\begin{cases}
        i/2 & \mbox{if $i$ is even};\\
        k+(i+1)/2 & \mbox{if $i$ is odd}.
    \end{cases}$$
    Modulo $2k$, one has $\hat{g}_i = k+i$ for all $1 \leq i \leq 2k-2$, and $\hat g_0=k-1$. Therefore, ${\bf g}$ is a harmonious sequence in $\left (\mathbb{Z}_{2^m} \right )^\flat$. 
\end{proof}

\begin{theorem}\label{order2CP}
   Let $G$ be an abelian group with a unique element of order 2. Then $G^\flat$ is harmonious. 
\end{theorem}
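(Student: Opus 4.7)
The plan is to use the fundamental theorem of finite abelian groups to reduce to the case $G \cong \mathbb{Z}_{2^m} \times H$ with $m\geq 1$ and $H$ an abelian group of odd order, and then prove the result by induction on $|H|$. The base case $|H|=1$ is handled directly by Lemma~\ref{CPpowersof2}, which provides a harmonious sequence in $\mathbb{Z}_{2^m}^\flat$.

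For the inductive step I would pick an odd prime $p$ dividing $|H|$, factor $H \cong H' \times \mathbb{Z}_p$, and set $K = \mathbb{Z}_{2^m}\times H'$, so that $G \cong K\times \mathbb{Z}_p$ and $K$ still has $\imath_G$ as its unique involution. The inductive hypothesis then furnishes a harmonious sequence ${\bf g}: g_0,\ldots,g_{n-1}$ in $K^\flat$ with $n=|K|-1$, which I would cyclically rotate so that $g_0=0_K$. Independently, Theorem~\ref{abelianpseq} provides a strongly harmonious sequence ${\bf h}: h_0=0, h_1,\ldots,h_{p-1}$ in $\mathbb{Z}_p$.

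The main construction is a block-interleaving scheme in the spirit of Lemmas~\ref{dtodp} and~\ref{drpinduct}. For each $s \in \{0,\ldots,p-1\}$ I would form a length-$n$ block $B_s$ whose $i$-th entry is $(g_i, \phi(s,i))$, where the $\mathbb{Z}_p$-shift $\phi(s,i)$ is built from ${\bf h}$ (for instance $\phi(s,i)=h_{s+i}$, indices mod $p$). The strongly harmonious property of ${\bf h}$ then ensures that as $s$ varies, the $i$-th within-block sum attains each value of $\{\hat g_i\}\times\mathbb{Z}_p$ exactly once, so the within-block sums collectively sweep out $(K^\flat\setminus\{\hat g_0\})\times\mathbb{Z}_p$, while the $p$ cyclic between-block sums exactly cover $\{\hat g_0\}\times\mathbb{Z}_p$. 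To reach all of $G^\flat$ I would then splice in the $p-1$ elements $(\imath_K, l)$ for $l\in\mathbb{Z}_p\setminus\{0\}$ at positions adjacent to the $(0_K,\cdot)$-entries of the concatenated sequence (which exist because $g_0=0_K$); the choice $g_0=0_K$ guarantees that one of the two new sums created by each insertion has first coordinate $\imath_K$, thereby contributing the missing set $\{\imath_K\}\times(\mathbb{Z}_p\setminus\{0\})$.

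The principal obstacle is the insertion bookkeeping. Each insertion loses one sum and gains two, and one easily checks that the non-$\imath_K$ gained sum is never equal to the lost sum individually, so the $p-1$ insertions must collectively conspire: the multiset of non-$\imath_K$ gained sums must match the multiset of lost sums, without colliding with any within-block or between-block sum. I would tune the shift function $\phi$, the cyclic ordering of the values $l_1,\ldots,l_{p-1}$, and the specific insertion positions to enforce this matching, using the strongly harmonious identities $h_k+h_{p-k}=0$ and the fact that $\{h_i+h_{i+k}\}_i=\mathbb{Z}_p$ for every $k$ as the algebraic tools. Working out this matching is the delicate heart of the argument and is where I expect the proof to become technical.
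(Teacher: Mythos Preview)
Your inductive outline is reasonable in spirit, but the insertion step contains a structural obstruction that the tuning parameters you list cannot fix. The difficulty is in the \emph{first} ($K$-)coordinate, where the shift function $\phi$, the values $l_1,\ldots,l_{p-1}$, and the choice of block all have no effect.

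Concretely: before any insertions, your block sequence has length $p(|K|-1)$ and its cyclic sums form a permutation of $K^\flat\times\mathbb{Z}_p$; you need the final sums to be a permutation of $G^\flat=(K^\flat\cup\{\imath_K\})\times\mathbb{Z}_p\setminus\{(\imath_K,0)\}$. Inserting $(\imath_K,l)$ between neighbours with first coordinates $a,b\in K^\flat$ removes one sum with first coordinate $a+b$ and adds sums with first coordinates $a+\imath_K$ and $b+\imath_K$. Since you commit to inserting adjacent to $(0_K,\cdot)$-entries, every $(a,b)$ is either $(0_K,g_1)$ or $(g_{n-1},0_K)$. Say you do $\alpha$ insertions of the first kind and $\beta$ of the second, $\alpha+\beta=p-1$. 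Summing the signed first-coordinate counts, the net effect on the fibre over $g_1$ is $-\alpha+\beta\,[\,g_{n-1}+\imath_K=g_1\,]$, and over $g_{n-1}$ it is $-\beta+\alpha\,[\,g_1+\imath_K=g_{n-1}\,]$. Both must vanish. This forces either $\alpha=\beta=0$ (impossible for $p>1$) or the extra identity $g_{n-1}=g_1+\imath_K$ together with $\alpha=\beta$. Nothing in your inductive hypothesis guarantees that the harmonious sequence in $K^\flat$ can be rotated to satisfy $g_0=0_K$ \emph{and} $g_{n-1}=g_1+\imath_K$; you would have to strengthen the induction to carry such a condition, and then verify it in the base case and preserve it through the step --- none of which you have done. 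Even granting that identity, the second-coordinate matching (that the $p-1$ lost $\{g_1\}$-fibre sums are exactly restored by the gained $\{g_1+\imath_K\}$-fibre sums, value for value in $\mathbb{Z}_p$) is a further nontrivial constraint you have not addressed.

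For comparison, the paper's proof is not inductive at all. It takes a harmonious sequence ${\bf h}$ in the \emph{whole} odd group $H$ (not in a cyclic factor), and builds the sequence directly from two types of $\mathbb{Z}_{2^m}$-blocks: the length-$(2k-1)$ block ${\bf g}$ from Lemma~\ref{CPpowersof2}, and a carefully rearranged length-$2k$ permutation ${\bf s}$ of all of $\mathbb{Z}_{2^m}$ (so the involution $k$ is built into ${\bf s}$ rather than spliced in afterwards). Alternating these two block types over the $h_i$'s makes the bookkeeping explicit and avoids the insertion problem entirely.
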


\begin{proof}
    Let $G \cong \mathbb{Z}_{2^m} \times H$, where $H$ is an odd abelian group of order $n$, and $m\geq 1$. Let ${\bf h} : h_0, h_1, \ldots, h_{n-1}$ be a harmonious sequence in $H$ with $h_0 = 0_H$. 
    
    Case 1: Suppose that $m > 2$. Let $k=2^{m-1}$ and ${\bf g} : g_0, \ldots, g_{2k-2}$ be the harmonious sequence in $\left (\mathbb{Z}_{2^m} \right )^\flat$ given in Lemma \ref{CPpowersof2}. Let ${\bf s} : s_0, \ldots, s_{2k-1}$ be the sequence
    \[
    {\bf s}: 0, k, g_1, \ldots, g_k, g_{k+2}, g_{k+1}, g_{k+4}, g_{k+3}, \ldots, g_{2k-2}, g_{2k-3},
    \]
    where ${\bf s}$ lists $g_1,\ldots, g_{k}$ in the order they appear in ${\bf g}$ but the pairs $g_i,g_{i+1}$ in ${\bf g}$ for $k+1 \leq i \leq 2k-3$, $i$ odd, are reversed in ${\bf s}$. The set of sums $\{ s_{i-1}+s_{i} : 1 \leq i \leq 2k-1\}$ equals $\{ k, \hat{g}_1, ..., \hat{g}_{2k-2} \}$. Let $\boldsymbol\ell_i$, $0\leq i \leq (n-3)/2$, denote the ``block" of $4k$ elements
    \[
    \boldsymbol\ell_i: \overbrace{\begin{matrix}h_{2i} \\ g_0\end{matrix}, \begin{matrix}h_{2i} \\ g_1\end{matrix}, \ldots, \begin{matrix}h_{2i} \\ g_{2k-2}\end{matrix}}^\text{all $h_{2i}$},\overbrace{\begin{matrix}h_{2i+1} \\ s_0\end{matrix}, \begin{matrix}h_{2i+1} \\ s_1\end{matrix} , \ldots, \begin{matrix}h_{2i+1} \\ s_{2k-1}\end{matrix}}^\text{all $h_{2i+1}$},{\begin{matrix} h_{2i+2} \\ k\end{matrix}},
    \]
    and let $\boldsymbol\ell$ be the block of $2k-1$ elements
     $$\boldsymbol\ell: \overbrace{{\begin{matrix} h_{n-1} \\ g_0\end{matrix}}, {\begin{matrix} h_{n-1} \\ g_1\end{matrix}} , \ldots, {\begin{matrix} h_{n-1} \\ g_{2k-2}\end{matrix}}}^\text{all $h_{n-1}$},$$
    in  $ \mathbb{Z}_{2^m} \times H$. Then the sequence
    \[ {\bf p}:~\boldsymbol\ell_0,\boldsymbol\ell_1,\ldots, \boldsymbol\ell_{(n-3)/2}, \boldsymbol\ell,
\]
is a harmonious sequence in $G^\flat$. Clearly, every element of $G^\flat$ appears in ${\bf p}$. To see the same of ${\bf \hat p}$, one checks that the sums consist of the elements $(\hat g_i, 2h_j)$ for $1 \leq i \leq 2k-2$ and $0 \leq j \leq n-1$, the elements $(k, 2h_i)$ for $1 \leq i \leq n-1$, and the elements $(\hat g_0, \hat h_i)$ for $0 \leq i \leq n-1$. Since ${\bf h}$ is harmonious in the odd abelian group $H$ and ${\bf g}$ is harmonious in $(Z_{2^m})^\flat$, these comprise all elements of $G^\flat$.

Case 2: $m = 1$. In this case, Lemma \ref{CPpowersof2} provides ${\bf g} : 0$ and we adjust the definition of ${\bf s}$ to be ${\bf s} :  0, 1$. Then the sequence ${\bf p}$ as constructed above is a harmonious sequence in $G^\flat$.

Case 3: $m = 2$. In this case, Lemma \ref{CPpowersof2} provides ${\bf g} : 0, 1, 3$ and we use ${\bf s}: 0, 2, 3, 1$ in the construction of the harmonious sequence ${\bf p}$.
\end{proof}

The following theorem is a consequence of Theorem \ref{order2CP} and results on harmonious groups. 

\begin{theorem}\label{evenneartrans}
    If $G$ is an even abelian group, then $L(G)$ admits a cyclic near transversal. 
\end{theorem}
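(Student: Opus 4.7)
The plan is to reduce the theorem to the existence of a harmonious sequence on a subset of $G$ of size $|G|-1$, and then split into two cases based on whether or not $G$ has a unique involution.

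The underlying reduction is a direct translation: a harmonious sequence $g_0, g_1, \ldots, g_{n-2}$ on a subset $A\subset G$ of size $n-1$ (where $n=|G|$) yields a cyclic near transversal of $L(G)$ by taking the cells $c_i=(g_{i-1}, g_i)$ for $1\leq i\leq n-1$, with the convention $g_{n-1}=g_0$. The chain $g_0\to g_1\to\cdots\to g_{n-2}\to g_0$ is cyclic by construction; the rows (and equally the columns) enumerate $A$ exactly once because $\bf g$ is a permutation of $A$; and the cell entries $g_{i-1}+g_i=\hat g_i$ are pairwise distinct because $\hat{\bf g}$ is also a permutation of $A$. The single element of $G\setminus A$ is precisely what the partial transversal misses in its rows, columns, and (after a short check) symbols.

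If $G$ has a unique involution $\imath_G$, then Theorem~\ref{order2CP} immediately furnishes a harmonious sequence on $A=G^\flat=G\setminus\{\imath_G\}$, and the reduction above concludes this case. Otherwise $G$ has even order but no unique involution; since $\mathbb{Z}_2$ does have a unique involution, we must have $|G|\geq 4$. The characterization of Beals et al.\ recalled in the introduction (for abelian $G$ of order greater than $3$, the set $G^\sharp=G\setminus\{0_G\}$ is harmonious if and only if $G$ lacks a unique involution) then provides a harmonious sequence on $G^\sharp$, which again plugs into the reduction to produce the desired cyclic near transversal.

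The main point requiring care will be setting up the reduction correctly and verifying that cyclicity, distinctness of rows and columns, and distinctness of symbols all drop out of the harmony conditions on ${\bf g}$ and $\hat{\bf g}$; once that translation is written down, the rest of the proof is a direct appeal to Theorem~\ref{order2CP} in one case and the classical result from \cite{JG} in the other, with the small bookkeeping observation that an even abelian group without a unique involution must have order at least $4$.
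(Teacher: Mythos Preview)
Your proposal is correct and follows essentially the same approach as the paper: the same reduction from a harmonious sequence on a size-$(n-1)$ subset to a cyclic near transversal, and the same dichotomy (unique involution, handled by Theorem~\ref{order2CP} applied to $G^\flat$, versus non-cyclic 2-Sylow, handled by the Beals et al.\ result on $G^\sharp$). The paper additionally handles $|G|\le 8$ by inspection, but since Theorem~\ref{order2CP} carries no size restriction this step is redundant, and your streamlined version is in fact slightly cleaner.
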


\begin{proof} If the 2-Sylow subgroup of $G$ is non-cyclic and $G$ is of order at least 4, then $G^\sharp=G\backslash \{0_G\}$ is harmonious, hence $L(G)$ has a cyclic near transversal. Thus, suppose that the 2-Sylow subgroup of $G$ is cyclic, and so $G \cong \mathbb{Z}_{2^m} \times H$, where $m\geq 1$ and $H$ is an odd abelian group. If $G$ is of order at most 8, then one can prove by inspection that $L(G)$ has a cyclic near transversal (see Figure 4.1). Thus, suppose that $G$ is of order at least 10. By Theorem \ref{order2CP}, $G^\flat=G\backslash \{\imath_G\}$ is harmonious, and so $L(G)$ has a cyclic near transversal in this case as well.
\end{proof}

\begin{figure}[!htb] \label{fig2}
\begin{minipage}{.23\linewidth}
    \centering
        
         \begin{tabular}{|c|c|c|c|}
\hline
0 &  \cellcolor{gray!25}1 &\(2\)&\(3\)\\
\hline
\(1\)&\(2\)&\( \cellcolor{gray!25} 3 \)&\(  0\)\\
\hline
\(\cellcolor{gray!25} 2\)&\(3\)&\( 0\)&\(1\)\\
\hline
\(3\)&\(0\)&\( 1\)&\(2\)\\
\hline
\end{tabular}
    \end{minipage}%
   \begin{minipage}{.35\linewidth}

     \centering
        
         \begin{tabular}{|c|c|c|c|c|c|}
\hline
0 &  1 &\(\cellcolor{gray!25}2\)&\(3\)&\(4\)&\(5\)\\
\hline
\(1\)&\(2\)&\(  3 \)&\(\cellcolor{gray!25}4\)&\(5\)&\( 0\)\\
\hline
\( 2\)&\(3\)&\( 4\)&\(5\)&\(\cellcolor{gray!25} 0\)&\( 1 \)\\
\hline
\(\cellcolor{gray!25} 3\)&\(4\)&\(  5\)&\(0\)&\(1\)&\(2\)\\
\hline
\(4\)&\(\cellcolor{gray!25}5\)&\(0\)&\(1\)&\(2\)&\(3\)\\
\hline
\(5\)&\(0\)&\(1\)&\(  2\)&\(  3\)&\(4\)\\
\hline
\end{tabular}
    \end{minipage}%
    \begin{minipage}{.3\linewidth}
     
     \centering
        \begin{tabular}{|c|c|c|c|c|c|c|c|}
\hline
0 &  1 &\(2\)&\(\cellcolor{gray!25}3\)&\(4\)&\(5\)&\(6\)&\(7\)\\
\hline
\(1\)&\(2\)&\( 3 \)&\(4\)&\( \cellcolor{gray!25} 5\)&\(6\)&\(7\)&\(0\)\\
\hline
\(2\)&\(3\)&\( 4\)&\(5\)&\(6\)&\( \cellcolor{gray!25}7\)&\(0\)&\(1\)\\
\hline
\(3\)&\(4\)&\(5\)&\(6\)&\(7\)&\(0\)&\(\cellcolor{gray!25} 1\)&\( 2\)\\
\hline
\(\cellcolor{gray!25}4\)&\(5\)&\(6\)&\(7\)&\(0\)&\(1\)&\(2\)&\(3\)\\
\hline
\(5\)&\( \cellcolor{gray!25}6\)&\(7\)&\( 0\)&\(1\)&\(2\)&\(3\)&\(4\)\\
\hline
\( 6\)&\(7\)&\(\cellcolor{gray!25} 0\)&\(1\)&\(2\)&\(3\)&\(4\)&\(5\)\\
\hline
\(7\)&\(0\)&\(1\)&\(2\)&\(3\)&\(4\)&\(  5\)&\(6\)\\
\hline
\end{tabular}
    \end{minipage}%
    
     \caption{Examples of cyclic near transversals for $\mathbb{Z}_{4}$, $\mathbb{Z}_6$, and $\mathbb{Z}_8$, respectively. }
\end{figure}

In the rest of this section, we show that if $H$ is an abelian group of odd order and $G \cong  \mathbb{Z}_{2^m} \times H$ is of order at least 10, then $G^\natural=G \backslash \{0_G,\imath_G\}$ is harmonious. We begin with the case $m = 1$.

\begin{lemma}\label{rcp2}
     Let $H$ be an odd abelian group of order at least 5. Then $\left (  \mathbb{Z}_2  \times H \right )^\natural$ is harmonious.  
\end{lemma}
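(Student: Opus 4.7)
The plan is to reduce the lemma to the harmoniousness of $H^\sharp$. Since $H$ is an odd abelian group of order $n\geq 5$, it has no involution, so by the result cited in the introduction \cite{JG}, $H^\sharp=H\setminus\{0_H\}$ is harmonious. I fix a harmonious sequence ${\bf v}:v_0,v_1,\ldots,v_{n-2}$ in $H^\sharp$; with all subscripts on $v$ interpreted modulo $n-1$, both the list $(v_j)$ and the cyclic sums $(v_j+v_{j+1})$ are permutations of $H^\sharp$.

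I then construct a sequence ${\bf u}$ of length $2(n-1)$ in $\mathbb{Z}_2\times H$ as $n-1$ concatenated blocks of size two, where block $j$ (for $0\leq j\leq n-2$) consists of the pair
\[
\bigl(j\bmod 2,\,v_j\bigr),\ \bigl(j\bmod 2,\,v_{j+1}\bigr).
\]
Because $n-1$ is even, the first coordinates alternate $0,1,0,1,\ldots$ consistently around the cycle, so every inter-block transition flips the first coordinate while every intra-block transition fixes it; and by design consecutive blocks share their second coordinate at the boundary, which is what makes the between-block sums transparent.

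For the verification, each $v\in H^\sharp$ equals $v_j$ for a unique $j$ and appears in ${\bf u}$ exactly at position $2j$ (with first coordinate $j\bmod 2$) and at position $2j-1$ in block $j-1$ (with first coordinate $(j-1)\bmod 2$), hence exactly once with each of $0$ and $1$; so ${\bf u}$ is a permutation of $(\mathbb{Z}_2\times H)^\natural$. The intra-block sums evaluate to $(0,v_j+v_{j+1})$ and exhaust $\{0\}\times H^\sharp$ by the harmoniousness of ${\bf v}$; the inter-block sums evaluate to $(1,2v_{j+1})$ and exhaust $\{1\}\times H^\sharp$ because $h\mapsto 2h$ is a bijection of $H$ (as $|H|$ is odd) and $\{v_{j+1}\}=H^\sharp$. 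In particular no sum vanishes in the second coordinate, so all sums land in $(\mathbb{Z}_2\times H)^\natural$. The main step is really just spotting the right block pattern; once it is written down, both halves of the harmoniousness condition for ${\bf u}$ reduce immediately to standard facts about ${\bf v}$ and the doubling map on an odd abelian group.
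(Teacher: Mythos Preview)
Your proof is correct and is essentially the same as the paper's: both start from a harmonious sequence in $H^\sharp$ and lift it to $(\mathbb{Z}_2\times H)^\natural$ so that one family of consecutive sums yields $(0,\hat v_j)$ via the harmoniousness of ${\bf v}$ and the other yields $(1,2v_j)$ via the doubling bijection on an odd group. In fact, after a cyclic shift by one position and a relabeling of the $v_j$, your sequence coincides with the paper's sequence ${\bf g}$; the only cosmetic difference is that you group terms into blocks with constant $\mathbb{Z}_2$-coordinate, whereas the paper groups them into pairs with constant $H$-coordinate.
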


\begin{proof}
    Let $G =  \mathbb{Z}_2 \times H $ and $k = |H|$. Let ${\bf h} : h_1, \ldots, h_{k-1}$ be a harmonious sequence in $H^\sharp=H \backslash \{0_H\}$. Let ${\bf g}: g_1, g_2, \ldots, g_{2k-2}$ be the following sequence in $G$:
\[
    {\bf g}:~{ \begin{matrix} h_1 \\ 0 \end{matrix}},{\begin{matrix} h_1 \\  1\end{matrix}}, {\begin{matrix} h_2 \\ 1 \end{matrix}}, {\begin{matrix} h_2 \\ 0 \end{matrix} }, {\begin{matrix} h_3 \\ 0\end{matrix} }, {\begin{matrix} h_3 \\ 1 \end{matrix}}, \ldots, {\begin{matrix} h_{k-1} \\ 1\end{matrix}}, {\begin{matrix} h_{k-1} \\ 0 \end{matrix}},
\]
where the bottom components alternate between the pairs 0,1 and 1,0. 
 It is clear that every element of $G$, except $(0, 0_H)$ and $(1, 0_H)$, appears once in ${\bf g}$. To see the same for ${\bf \hat g}$, observe that $\hat{g}_{2i} = (1,2h_i)$ for $1\leq i \leq k-1$, yielding the elements $(1,x)$ for all $x \in H^\sharp$. Also, $\hat{g}_{2i+1} = (0,\hat{h}_{i+1})$ for $ 0 \leq i \leq k-2$, yielding the elements $(0,x)$ for all $x \in H^\sharp$ since ${\bf h}$ is harmonious in $H^\sharp$. Thus ${\bf g}$ is a harmonious sequence in $G^\natural$.
\end{proof}

The next three lemmas establish the existence of sequences with certain properties in cyclic abelian groups of even order. We will use these sequences in the construction of harmonious sequences. 

\begin{lemma}\label{ysoddmult4}
    Let $G = \mathbb{Z}_{4k}$, where $k$ is odd and $k \geq 3$. Then there exists a harmonious sequence ${\bf y}: y_1, y_2, \ldots, y_{4k-2}$ in $G^\natural$ such that $y_1 = 2$ and $y_{4k-2} = 2k+1$.
\end{lemma}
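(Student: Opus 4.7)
The plan is to exhibit the sequence $y_1, \ldots, y_{4k-2}$ explicitly and verify by direct computation that both it and its cyclic consecutive-sum sequence $\hat{\mathbf{y}}$ enumerate $G^\natural = \mathbb{Z}_{4k} \setminus \{0, 2k\}$. The natural starting template is an alternating layout placing residues from the \emph{small} block $S = \{1, \ldots, 2k-1\}$ at the odd positions $1, 3, \ldots, 4k-3$ and residues from the \emph{large} block $L = \{2k+1, \ldots, 4k-1\}$ at the even positions $2, 4, \ldots, 4k-2$. This respects the counts $|S| = |L| = 2k-1$ and accommodates the boundary data $y_1 = 2 \in S$ and $y_{4k-2} = 2k+1 \in L$.

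A short analysis shows, however, that strict interleaving cannot succeed: the $4k-2$ consecutive sums would then all be of the form $s + \ell \pmod{4k}$ with $s \in S$, $\ell \in L$, and as integers these sums lie in $[2k+2, 6k-2]$, so modulo $4k$ they land in $\{2k+2, \ldots, 4k-1\} \cup \{0, 1, \ldots, 2k-2\}$, always missing $2k-1, 2k, 2k+1$. After discarding the forbidden value $0$ we retain only $4k-4$ admissible sums, two short of the required $4k-2$. I therefore propose to break the alternation in exactly two positions by inserting one adjacent (small, small) pair and one adjacent (large, large) pair; using $k$ odd, these can be chosen to realize the missing sums $2k-1$ and $2k+1$, since sums of two elements of $S$, or of two elements of $L$, range over $\{2, \ldots, 4k-2\} \pmod{4k}$ and do hit those specific odd values. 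To minimize bookkeeping I would prescribe the small and large values in a near-monotone arithmetic order---odd positions running through $2, 3, 4, \ldots$ and even positions through $2k+2, 2k+3, \ldots$---with localized swaps near the two breaks and near position $4k-2$ so as to enforce $y_{4k-2} = 2k+1$.

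In such a template the ``generic'' consecutive sums form an arithmetic progression (along any stretch of strict alternation one gets $\hat{y}_j = c + j$ for some constant $c$), which automatically guarantees distinctness within that stretch. Verification then splits into confirming: (a) the $y_i$ comprise $S \cup L = G^\natural$; (b) the generic sums cover the expected arithmetic-progression portion of $G^\natural$; (c) the handful of exceptional sums (from the two breaks and the wraparound $\hat{y}_1 = y_{4k-2} + y_1 = 2k+3$) supply precisely the remaining elements $2k-1$ and $2k+1$ without colliding with one another or with the generic sums. The main obstacle is designing the two breaks---their locations and exact values---so that all of (a)--(c) hold simultaneously and uniformly for every odd $k \geq 3$; the constraint $y_{4k-2} = 2k+1$ especially couples the end of the sequence to the wraparound sum and leaves little room to maneuver. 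Once the right template is fixed, each subcheck reduces to a short verification of linear congruences modulo $4k$, and the oddness of $k$ enters in confirming the residues $2k \pm 1$ are actually achieved by the chosen small/small and large/large adjacencies.
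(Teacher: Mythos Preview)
Your strategic analysis is sound and mirrors the paper's approach closely: the paper also interleaves the small block $S=\{1,\dots,2k-1\}$ on odd indices with the large block $L=\{2k+1,\dots,4k-1\}$ on even indices, and breaks the alternation in exactly two places---one $L$-$L$ adjacency (at positions $2k,2k+1$, with $y_{2k}+y_{2k+1}=(3k+1)+3k\equiv 2k+1$) and one $S$-$S$ adjacency (at positions $4k-4,4k-3$, with $y_{4k-4}+y_{4k-3}=(2k-2)+1=2k-1$). So conceptually you have identified precisely the right construction.

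However, what you have written is an outline, not a proof. You yourself flag that ``the main obstacle is designing the two breaks---their locations and exact values---so that all of (a)--(c) hold simultaneously and uniformly for every odd $k\ge 3$'', and then you do not resolve it: no explicit sequence is written down and no verification is carried out. The paper fills this gap with a nine-clause piecewise formula for $y_i$ (five special values at $i\in\{2k-2,\dots,2k+2\}$ and four residue classes modulo $4$ for $i>2k+2$), followed by explicit computation of the sums in three ranges. That such a template exists is not obvious a priori---the endpoint constraint $y_{4k-2}=2k+1$ couples to the wraparound sum $\hat y_1=2k+3$, and the second half of the sequence runs on a mod-$4$ rhythm different from the first half (this is where the hypothesis that $k$ is odd actually enters, since it forces $2k+2\equiv 0\pmod 4$ and aligns the four cases). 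Your closing assertion that ``once the right template is fixed, each subcheck reduces to a short verification'' is true, but finding that template \emph{is} the content of the lemma, and you have not supplied it.
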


\begin{proof}
    Define the sequence ${\bf y} : y_1, \ldots, y_{4k-2}$ as follows:
         \[
    y_i=\begin{cases}
        \frac{i+1}{2}+1 & \mbox{if $1\leq i \leq 2k-3$ and $i$ is odd};\\
        \frac{i}{2}+2k+1 & \mbox{if $1 < i < 2k-3$ and $i$ is even};\\
        3k+2 & \mbox{if $i=2k-2$};\\
        k+2 & \mbox{if $i=2k-1$}; \\
        3k+1 & \mbox{if $i=2k$}; \\
        3k & \mbox{if $i=2k+1$}; \\
        k+1 & \mbox{if $i=2k+2$}; \\
         \frac{i}{2} & \mbox{if $i =0 \pmod 4$ and $ i > 2k+2$}; \\
        \frac{i+1}{2}+2k+2 & \mbox{if $i =1 \pmod 4$ and $ i > 2k+2$}; \\
        \frac{i}{2}+2 & \mbox{if $i =2 \pmod 4$ and $ i > 2k+2$}; \\
        \frac{i+1}{2}+2k & \mbox{if $i =3 \pmod 4$ and $ i > 2k+2$}. \\
    \end{cases}
    \]
    One checks that every element of $\mathbb{Z}_{4k}$, except $0$ and $2k$, appears once in ${\bf y}$. To see the same for ${\bf \hat{y}}$, we first observe that 
    \begin{equation}\label{set1}
        \{{\hat y}_1,{\hat y}_2,\ldots, {\hat y}_{2k-3}\}=\{2k+3, 2k+4,\ldots, 4k-1\}.
    \end{equation}
    In addition, we have
    \begin{equation}\label{set2}
        \{\hat{y}_{2k-2}, \hat{y}_{2k-1}, \hat{y}_{2k}, \hat{y}_{2k+1}, \hat{y}_{2k+2} \}=\{2,4,3,2k+1,1\}.
    \end{equation}
    In the case $k = 3$, one also has $\hat{y}_{2k+3}=5$ and $y_{2k+4} =8.$ It then follows from \eqref{set1} and \eqref{set2} that ${\bf \hat y}$ covers $\mathbb{Z}_{12}^\natural$. When $k>3$, we have 
    $$\{\hat{y}_{2k+3}, \hat{y}_{2k+4}, \hat{y}_{2k+5}, \hat{y}_{2k+6} \}=\{5,8,7,6\},$$ 
    and $\hat y_{2k+4l+j} = \hat y_{2k+j} + 4l$, where $3 \leq j \leq 6$, $1 \leq l \leq \frac{k-3}{2}$. Thus
    \begin{equation}\label{set3}
        \{{\hat y}_{2k+3},{\hat y}_{2k+4},\ldots, {\hat y}_{4k-2}\}=\{5,6,\ldots, 2k-1\} \cup \{2k+2\}.
    \end{equation}
    It then follows from \eqref{set1}, \eqref{set2}, and \eqref{set3} that ${\bf \hat y}$ covers $G^\natural$. 
\end{proof}

\begin{lemma}\label{ysmult8}
     Let $G = \mathbb{Z}_{8k}$, where $k \geq 2$. Then there exists a harmonious sequence ${\bf y}: y_1, y_2, \ldots, y_{8k-2}$ in $G^\natural$ such that $y_1 = 2$ and $y_{8k-2} = 4k+1$.
\end{lemma}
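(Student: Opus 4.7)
The plan is to define the sequence ${\bf y}$ explicitly with a piecewise formula, closely paralleling the construction of Lemma \ref{ysoddmult4}. The main idea is to interleave two arithmetic progressions in $\mathbb{Z}_{8k}$: one running through ``small'' residues $2, 3, 4, \ldots$ and one running through residues near $4k$, so that the consecutive sums $\hat y_i$ stay in a controlled half of $\mathbb{Z}_{8k}$ and automatically miss $0$ and $4k$ in most places. A short ad hoc ``transition block'' inserted near the middle of the sequence will then reroute ${\bf y}$ past the few indices where the generic formula would produce $0$ or $4k$ either as a term or as a sum.

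First I would set the initial portion of ${\bf y}$ so that $y_1 = 2$, with odd-indexed terms marching up through small values and even-indexed terms marching up through values near $4k$; this guarantees that the partial sums $\hat y_i$ land predictably in the upper half of $\mathbb{Z}_{8k}$. Next, I would insert a transition block of a handful of carefully chosen entries, analogous to $\{3k{+}2, k{+}2, 3k{+}1, 3k, k{+}1\}$ in Lemma \ref{ysoddmult4}, whose role is to skip past the forbidden values $0$ and $4k$ in both ${\bf y}$ and ${\bf \hat y}$ at their critical indices. Finally, I would continue with a period-four formula cycling through the remaining unused small and large elements, arranged so that $y_{8k-2} = 4k+1$; this last choice forces the wrap-around sum to be $y_{8k-2} + y_1 = 4k + 3 \in \mathbb{Z}_{8k}^\natural$, as required.

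The main obstacle is bookkeeping. One must verify that every element of $\mathbb{Z}_{8k} \setminus \{0, 4k\}$ appears exactly once in ${\bf y}$ and exactly once in ${\bf \hat y}$. Following the approach of Lemma \ref{ysoddmult4}, I would partition the index set $\{1, \ldots, 8k-2\}$ into a few ranges (initial block, transition block, and a $\bmod\,4$ tail), compute explicit sets that ${\bf y}$ and ${\bf \hat y}$ hit on each range (analogues of \eqref{set1}, \eqref{set2}, and \eqref{set3}), and check that these sets partition $\mathbb{Z}_{8k}^\natural$. Because the only hypothesis is $k \geq 2$, the boundary case $k = 2$ (so $|G| = 16$) is likely to require separate verification, parallel to the $k = 3$ subcase of Lemma \ref{ysoddmult4}, since the transition block may overlap with the tail when $k$ is small.
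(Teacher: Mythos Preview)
Your proposal is correct and follows essentially the same approach as the paper: an explicit piecewise definition consisting of an initial interleaving of two arithmetic progressions, a short hand-picked transition block near the middle, and a period-four tail, with verification done by partitioning the index set and checking the resulting sets of terms and sums, exactly as in Lemma~\ref{ysoddmult4}. The paper's transition block happens to be seven entries long (indices $4k{-}2$ through $4k{+}4$) rather than five, but otherwise the structure you outline is precisely what is used, and the paper likewise defers the verification to ``similar to the proof of Lemma~\ref{ysoddmult4}.''
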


\begin{proof}
    Define the sequence ${\bf y}: y_1, \ldots, y_{8k-2}$ as follows:
         \[
y_i=\begin{cases}
        \frac{i+1}{2}+1 & \mbox{if $1\leq i \leq 4k-3$ and $i$ is odd};\\
        \frac{i}{2}+4k+1 & \mbox{if $1 < i < 4k-3$ and $i$ is even};\\
        6k+2 & \mbox{if $i=4k-2$};\\
        2k+1 & \mbox{if $i=4k-1$}; \\
        6k & \mbox{if $i=4k$}; \\
        6k+1 & \mbox{if $i=4k+1$}; \\
        2k+3 & \mbox{if $i=4k+2$}; \\
        6k+3 & \mbox{if $i=4k+3$}; \\
        2k+2 & \mbox{if $i=4k+4$}; \\
        \frac{i}{2} & \mbox{if $i =0 \pmod 4$ and $ i > 4k+4$}; \\
        \frac{i+1}{2}+4k+2 & \mbox{if $i =1 \pmod 4$ and $ i > 4k+4$}; \\
        \frac{i}{2}+2 & \mbox{if $i =2 \pmod 4$ and $ i > 4k+4$}; \\
        \frac{i+1}{2}+4k & \mbox{if $i =3 \pmod 4$ and $ i > 4k+4$}. \\
    \end{cases}
    \]
    The proof that every element of $\mathbb{Z}_{8k}$, except for $0$ and $4k$, appears exactly once in each of ${\bf y}$ and ${\bf \hat y}$ is similar to the proof of Lemma \ref{ysoddmult4}.
\end{proof}

\begin{lemma}\label{xsmult4}
    Let $G = \mathbb{Z}_{4n}$, where $n \geq 3$. Then there exists a permutation ${\bf x}: x_0, x_1, \ldots, x_{4n-1}$ of elements of $G$ such that
    \begin{itemize}
    \item[i)] $\{ \hat{x}_0+2n\} \cup \{ \hat{x}_1, \hat{x}_2, \ldots, \hat{x}_{4n-1}\} = \mathbb{Z}_{4n}$, \item[ii)] $x_0 + 2n = 2$,
    \item[iii)] $x_{4n-1} = 2n+1$.
    \end{itemize}
\end{lemma}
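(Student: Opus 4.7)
The plan is to use the bijection $\mathbb{Z}_{4n} \leftrightarrow \{0,1\} \times \mathbb{Z}_{2n}$ given by $(a,b) \mapsto 2na+b$, and to build ${\bf x}$ by concatenating $2n$ ``pairs'', where the $j$-th pair consists of the two elements sharing a fixed $b$-value. Listing these $b$-values in the cyclic order $2, 3, \ldots, 2n-1, 0, 1$, and writing the $j$-th pair as $(s_j, b^{(j)}), (1-s_j, b^{(j)})$ for $s_j \in \{0,1\}$ to be determined, the choices $s_1 = 1$ and $s_{2n} = 0$ immediately yield $x_0 = 2n+2$ and $x_{4n-1} = 2n+1$, giving conditions (ii) and (iii).

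A direct calculation shows that each within-pair sum $\hat{x}_{2j-1}$ depends only on $b = b^{(j)}$: it equals $2n+2b$ for $b < n$ and $2b - 2n$ for $b \geq n$. As $b$ ranges over $\{0, 1, \ldots, 2n-1\}$, these account for every even element of $\mathbb{Z}_{4n}$ exactly once. Consequently, every between-pair sum $\hat{x}_{2j}$ is odd, and a brief computation expresses it as $2n \sigma_j + r_j$, where $r_j \in \{1, 3, \ldots, 2n-1\}$ depends only on the two adjacent $b$-values (and the carry from the $b$-addition), while $\sigma_j \in \{0,1\}$ depends linearly on $s_j$, $s_{j+1}$, and that carry. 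The problem thus reduces to choosing the $s_j$'s so that the $2n-1$ between-pair sums equal $\{1, 3, \ldots, 4n-1\} \setminus \{2n+3\}$.

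The key structural observation is that the $r_j$-values pair up: the transitions $T_j$ and $T_{j+n}$ share the same $r$-value for each $j = 1, \ldots, n-1$, whereas the transition $T_n$ alone has $r_n = 3$. Requiring $\sigma_j \neq \sigma_{j+n}$ for each matched pair forces both sums $r$ and $r + 2n$ to appear; requiring $\sigma_n = 0$ produces the sum $3$ while avoiding the forbidden sum $2n+3$. These conditions translate into a system of $n$ linear equations over $\mathbb{F}_2$ in the $2n-2$ free variables $s_2, \ldots, s_{2n-1}$.

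The main obstacle is exhibiting an explicit consistent solution. I would propose $s_j = 1$ for $j \in \{1, n, n+1, 2n-1\}$ and $s_j = 0$ otherwise, then verify each of the $n$ constraints directly; the verification amounts to routine carry bookkeeping, with a little extra care needed for the small case $n = 3$, where several of the intermediate index ranges collapse. Combining the within-pair (even) and between-pair (odd) contributions then yields $\{\hat{x}_1, \ldots, \hat{x}_{4n-1}\} = \mathbb{Z}_{4n} \setminus \{2n+3\}$, and since $\hat{x}_0 = (2n+1) + (2n+2) \equiv 3 \pmod{4n}$ gives $\hat{x}_0 + 2n = 2n+3$, condition (i) follows.
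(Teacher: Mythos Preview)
Your approach is correct, and I have checked that the proposed assignment $s_j=1$ for $j\in\{1,n,n+1,2n-1\}$ and $s_j=0$ otherwise does satisfy all of the $\mathbb{F}_2$-constraints (including the boundary cases $n=3,4$), so the plan carries through.

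It is, however, quite different from the paper's argument. The paper simply writes down an explicit piecewise formula
\[
x_i=\begin{cases}
2(i+n)+2 & 0\le i\le n,\\
i-(-1)^{i+n}n+2 & n+1\le i\le 3n-1,\\
2(i+n)+3 & 3n\le i\le 4n-1,
\end{cases}
\]
and verifies (i)--(iii) directly: the two outer blocks produce all even elements and yield the even consecutive sums, while the middle alternating block supplies the odd elements and odd sums. By contrast, you pass through the identification $\mathbb{Z}_{4n}\cong\{0,1\}\times\mathbb{Z}_{2n}$, list elements in pairs of equal $b$-coordinate, reduce the odd-sum requirement to an $\mathbb{F}_2$-linear system in the ``flip'' bits $s_j$, and exhibit an explicit solution. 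Your route explains \emph{why} such a permutation exists---the obstruction is a solvable linear system---and would adapt readily to variants of the boundary conditions; the paper's route is shorter and requires no case bookkeeping beyond reading off the three arithmetic progressions. The two constructions genuinely differ (already for $n=3$ they yield distinct sequences).
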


\begin{proof}
  Define the sequence ${\bf x}$ as follows:
    \[
    x_i=\begin{cases}
        2(i+n)+2 & \mbox{if $0\leq i \leq n$};\\
        i-(-1)^{i+n}n+2 & \mbox{if $n+1\leq i \leq 3n-1$};\\
        2(i+n)+3 & \mbox{if $3n \leq i \leq 4n-1$}. \\
    \end{cases}
    \]
     It is readily checked that every element of $\mathbb{Z}_{4n}$ appears once in ${\bf x}$ and that the second and third properties are satisfied. To see that the first property is satisfied, we observe that, modulo $4n$, the even numbers $\{4i+2 : 1 \leq i \leq n\}$ are given by $\{\hat{x}_i : 1 \leq i \leq n \}$ and the even numbers $\{4i+4 : 0 \leq i \leq n-1\}$ are given by $\{ \hat{x}_i : 3n \leq i \leq 4n-1 \}$. The set of sums $\{ \hat{x}_i : n+1 \leq i \leq 3n-1 \}$ yields the odd numbers in $\mathbb{Z}_{4n}$, except for $2n+3$, which is given by $\hat{x}_0+2n$. 
\end{proof}

We are now ready to prove the theorem. One checks by inspection that if $G$ is an even group of order less than 10 with a unique involution (i.e. $G \cong \mathbb{Z}_{2k}$, $k=1,2,3,4$), then $G^\natural$ is not harmonious. 

\begin{theorem}\label{order2RCP}
     Let $G$ be an abelian group of order at least 10 with a unique involution $\imath_G$. Then $G^\natural=G \backslash \{0_G,\imath_G\}$ is harmonious. 
\end{theorem}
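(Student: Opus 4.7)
The plan is to reduce to the decomposition $G\cong\mathbb{Z}_{2^m}\times H$ with $H$ odd abelian and $m\ge1$, and then to split on $m$. For $m=1$ the assumption $|G|\ge10$ forces $|H|\ge5$, and Lemma~\ref{rcp2} provides the sought harmonious sequence in $(\mathbb{Z}_2\times H)^\natural$ directly. For $m\ge2$ I will build on Lemmas~\ref{ysoddmult4}, \ref{ysmult8}, and \ref{xsmult4} via a block construction.

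For $m\ge2$, I use the Chinese remainder theorem to peel off a cyclic even direct factor. Pick a prime-power direct summand $\mathbb{Z}_{p^a}$ of $H$ (with $p^a\ge3$ when $H$ is nontrivial; when $H$ is trivial, $|G|\ge10$ forces $m\ge4$, and I take $p^a=1$), and write $G\cong\mathbb{Z}_N\times H'$ with $N=2^mp^a$ and $H'$ odd abelian of order $q\ge1$. In every remaining case $N$ is either of the form $4k$ with $k$ odd, $k\ge3$, or a multiple of $8$ at least $16$, so Lemma~\ref{ysoddmult4} or~\ref{ysmult8} supplies a harmonious sequence $\mathbf{y}\colon y_1,\ldots,y_{N-2}$ in $\mathbb{Z}_N^\natural$ with $y_1=2$ and $y_{N-2}=N/2+1$. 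Since also $N=4n$ with $n\ge3$, Lemma~\ref{xsmult4} provides a permutation $\mathbf{x}\colon x_0,\ldots,x_{N-1}$ of $\mathbb{Z}_N$ with $x_0=N/2+2$, $x_{N-1}=N/2+1$, whose consecutive sums form a permutation of $\mathbb{Z}_N$ once $\hat{x}_0$ is corrected by the involution $N/2$. Let $h_0=0_{H'},h_1,\ldots,h_{q-1}$ be a harmonious sequence in $H'$ started at $0_{H'}$ (a cyclic rotation of any harmonious sequence of $H'$, or the one-term sequence when $q=1$).

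I will assemble the concatenated sequence $\mathbf{p}\colon B_0,B_1,\ldots,B_{q-1}$ in $G^\natural$, where $B_0$ is the length-$(N-2)$ block $(y_1,0),\ldots,(y_{N-2},0)$ and each $B_j$ for $j\ge1$ is the length-$N$ block $(x_0+\epsilon_j,h_j),(x_1+\epsilon_j,h_j),\ldots,(x_{N-1}+\epsilon_j,h_j)$, with shift $\epsilon_j=N/2$ for odd $j$ and $\epsilon_j=0$ for even $j$. Since $2\cdot(N/2)=0$, the shift does not affect internal consecutive sums, so the internal sums in $B_0$ cover $(\mathbb{Z}_N^\natural\setminus\{N/2+3\})\times\{0\}$ and the internal sums in $B_j$ (for $j\ge1$) cover $(\mathbb{Z}_N\setminus\{N/2+3\})\times\{2h_j\}$; since doubling is a bijection on the odd group $H'$, the elements $\{2h_j:1\le j\le q-1\}$ exhaust $H'\setminus\{0\}$, so the internal sums together realize all of $G^\natural$ except the column $\{(N/2+3,k):k\in H'\}$. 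The endpoint choices for $\mathbf{y}$ and $\mathbf{x}$ together with the alternation of shifts are designed so that every block-boundary sum and the wrap-around sum of $\mathbf{p}$ has first coordinate exactly $N/2+3$, while the second coordinates run through $\hat{h}_1,\hat{h}_2,\ldots,\hat{h}_{q-1},\hat{h}_0$, a permutation of $H'$ by harmoniousness of $\mathbf{h}$. Thus the $q$ transition sums fill in the missing column precisely; the degenerate case $q=1$ collapses to $\mathbf{p}=\mathbf{y}$, which is already harmonious in $G^\natural=\mathbb{Z}_N^\natural$.

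The main obstacle is verifying that the shift alternation and endpoint arithmetic genuinely conspire to land every transition sum in the single column $N/2+3$ of $\mathbb{Z}_N$. The crucial alignment is that the excluded wrap-around sum $\hat{y}_1=y_{N-2}+y_1=N/2+3$ of $\mathbf{y}$ agrees with the ``corrected defect'' $\hat{x}_0+N/2=(x_{N-1}+x_0)+N/2=N/2+3$ of Lemma~\ref{xsmult4}(i); once this is combined with the parity condition that $q$ is odd (so $B_{q-1}$ is unshifted and the wrap-around sum becomes $x_{N-1}+y_1=N/2+3$), and with the observation that between-block sums $(x_{N-1}+\epsilon_j)+(x_0+\epsilon_{j+1})$ always reduce to $N/2+3$ because consecutive $\epsilon_j$ differ by $N/2$, the remaining check that $\mathbf{p}$ and $\hat{\mathbf{p}}$ are permutations of $G^\natural$ is a direct counting argument.
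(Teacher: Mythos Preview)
Your proposal is correct and follows essentially the same approach as the paper: the paper also splits off a cyclic factor to write $G\cong\mathbb{Z}_N\times H'$ with $N$ a multiple of $4$ at least $12$, uses the $\mathbf{y}$ sequence (from Lemma~\ref{ysoddmult4} or~\ref{ysmult8}) on the $h_0=0_{H'}$ block and shifted copies $\mathbf{x}'=\mathbf{x}+N/2$ alternating with $\mathbf{x}$ (your $\epsilon_j$) on the remaining blocks, with the same endpoint/defect alignment $\hat y_1=\hat x_0+N/2=N/2+3$ pushing all block-boundary sums into a single $\mathbb{Z}_N$-column realized by the harmonious sums $\hat h_j$. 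The only cosmetic difference is that the paper separates $m=2$ and $m\ge3$ into two cases while you handle them uniformly.
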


\begin{proof}
    Let $G  \cong \mathbb{Z}_{2^m} \times H$, where $H$ is an odd abelian group, and $|G| \geq 10$.

    Case 1: $m = 1$ and $|H| \geq 5$. The result follows from Lemma \ref{rcp2}.

    Case 2: $m = 2$ and $|H| \geq 3$. We can write $H \cong  \mathbb{Z}_k \times H'$, where $k \geq 3$ is odd, so that $G \cong  \mathbb{Z}_{4k} \times H'$. If $H'$ is the trivial group, the result follows directly from Lemma \ref{ysoddmult4}. Otherwise, let $n=|H'|$ and let ${\bf h}: h_0, \ldots, h_{n-1}$ be a harmonious sequence in $H'$ with $h_0 = 0_{H'}$. Let ${\bf x}: x_0, \ldots, x_{4k-1}$ and ${\bf y}: y_1, \ldots, y_{4k-2}$ be sequences in $\mathbb{Z}_{4k}$ satisfying the properties in Lemmas \ref{xsmult4} and \ref{ysoddmult4}, respectively. Also, define a new sequence ${\bf x'}$ in $\mathbb{Z}_{4k}$ by letting $x'_i = x_i + 2k$, $0\leq i \leq 4k-1$, which also satisfies the properties in Lemma \ref{xsmult4}. Denote by $\genfrac{}{}{0pt}{1}{h_i}{\bf x}$ the ``block" of $4k$ elements in $G$
       \[
   { \begin{matrix} h_i \\ {\bf x}\end{matrix}}: \overbrace{{\begin{matrix} h_i \\ x_0\end{matrix}}, {\begin{matrix} h_i \\ x_1\end{matrix}} , \ldots, {\begin{matrix} h_i \\ x_{4k-1}\end{matrix}}}^{\text{all $h_i$}},
\]
    for $1 \leq i \leq n-1$, and similarly define $\genfrac{}{}{0pt}{1}{h_i}{\bf x'}$, as well as the block of $4k-2$ elements $\genfrac{}{}{0pt}{1}{h_0}{\bf y}$. We claim that the sequence
    \begin{equation} \label{transb} {\bf g}:~
{\begin{matrix} h_0 \\ {\bf y} \end{matrix}}, { \begin{matrix} h_1 \\ {\bf x'}\end{matrix}}, {\begin{matrix} h_2 \\ {\bf x}\end{matrix}}, {\begin{matrix} h_3 \\ {\bf x'} \end{matrix}}, \ldots,  {\begin{matrix} h_{n-2} \\  {\bf x'}\end{matrix}} , {\begin{matrix} h_{n-1} \\ {\bf x} \end{matrix}}
\end{equation}
    is a harmonious sequence in $G^\natural$. Since ${\bf x}$ and ${\bf x'}$ are permutations of $\mathbb{Z}_{4k}$ and ${\bf y}$ is a permutation of $\left ( \mathbb{Z}_{4k} \right )^\natural$, all elements of $G$ except $(0, 0_{H'})$ and $(2k,0_{H'})$ appear once in ${\bf g}$. We will show the same for ${\bf \hat g}$. First, consider elements of the form $(u,0_{H'})$ in $G^\natural$. The first block's sums yield the elements $(\hat{y}_j, 0_{H'})$ for $2 \leq j \leq 4k-2$, leaving out only $(\hat{y}_1,0_{H'}) = (2k+3, 0_{H'})$. Since ${\bf h}$ is a harmonious sequence in $H'$, there exists $1\leq i \leq n-2$ such that $h_i+h_{i+1} = 0_{H'}$. Then, at some transition between blocks in \eqref{transb}, we will have 
    \begin{equation}\nonumber
    \ldots, {\begin{matrix} h_i \\ x_{4k-1}\end{matrix}}, { \begin{matrix} h_{i+1} \\ x'_{0} \end{matrix}}, \ldots \quad \text{ or } \quad \ldots, {\begin{matrix} h_i \\ x'_{4k-1} \end{matrix} }, { \begin{matrix} h_{i+1} \\ x_{0} \end{matrix} }, \ldots 
    \end{equation}
    By the properties of ${\bf x}$ and construction of ${\bf x'}$, we have $x_{4k-1} + x'_0 = x'_{4k-1} + x_0 = 2k+3$. Thus $(u, 0_{H'})$ appears in ${\bf \hat g}$ for all $u\in \mathbb{Z}_{4k}^\natural$. Next, we observe that the consecutive sums in the blocks $\genfrac{}{}{0pt}{1}{h_i}{\bf x} $ and $\genfrac{}{}{0pt}{1}{h_i}{\bf x'}$ in \eqref{transb} provide elements of the form $(\hat{x}_j, 2h_i)$ or $(\hat{x'}_j, 2h_i)$, for all $1 \leq j \leq 4k-1$ and $1 \leq i \leq n-1$. Since $\{\hat{x}_j : 1 \leq j \leq n-1\} = \{\hat{x'}_j : 1 \leq j \leq n-1\}$, by Lemma \ref{xsmult4}, these yield the elements of $G$ of the form $(u,h)$ for all $h\in H'\backslash \{ 0_{H'}\}$ except when $u=\hat{x}_0+2k$. At the transitions between blocks in \eqref{transb}, the sums have second components $h_{i}+ h_{i+1}$ for all $0 \leq i \leq n-1$, and first components $y_{4k-2}+x'_0$, $ x_{4k-1}+x'_0$, or $x'_{4k-1} + x_0$, all of which equal $\hat x_0+2k$. Since ${\bf h}$ is harmonious, these provide the remaining elements of the form $(\hat{x}_0+2k,h)$ for all $h\in H' \backslash \{0_{H'}\}$. This completes the proof that ${\bf g}$ is a harmonious sequence in $\left ( \mathbb{Z}_{4k} \times H' \right )^\natural$.

    Case 3: $m \geq 3$ and $|H| \geq 3$. We write $G = \mathbb{Z}_{8k} \times H'$ for some integer $k$ and odd group $H'$. When $k \geq 3$ and $H'$ is the trivial group, the result follows directly from Lemma \ref{ysmult8}. Otherwise, we construct a harmonious sequence ${\bf g}$ as in Case 2, with two adjustments to the proof. First, let the sequence ${\bf y}: y_1, \ldots, y_{4k-2}$ be as in Lemma \ref{ysmult8}, and the sequence ${\bf x}: x_0, \ldots, x_{8k-1}$ be the sequence obtained in Lemma \ref{xsmult4}. Second, define the sequence ${\bf x'}$ by $x'_i = x_i + 4k$. Using these sequences, the same construction using blocks yields a harmonious sequence for $G^\natural$. 
\end{proof}

Let $B$ be a nonempty subset of $G$ such that there exists a permutation $a_0,\ldots, a_k$ of elements of $G \backslash B$ with $a_0a_1 \cdots a_k=1_G$. Prompted by Theorem \ref{order2RCP}, we conjecture that there exists a function $f$ such that if $|G| \geq f(|B|)$, then $G \backslash B$ is harmonious.


\begin{thebibliography}{20}

\bibitem{AKP} B. Alspach, D.L. Kreher, and A. Pastine, {\it The Friedlander–Gordon–Miller conjecture is true}, {Australas. J. Combin.} {\bf 67} (2017), 11--24.



\bibitem{AI1}B.A. Anderson and E.C. Ihrig, {\it All groups of odd order have starter-translate 2-sequencings}, {\it Australas. J. Combin.} {\bf 6} (1992), 135--146.



\bibitem{JG}  R. Beals, J.A. Gallian, P. Headley, and D. Jungreis, {\it Harmonious groups}. J. Combin. Theory Ser. A {\bf 56} (1991), no. 2, 223--238. 


\bibitem{R}C.J. Colbourn and J.H. Dinitz (eds), {\it The CRC handbook of combinatorial designs}, CRC Press, Boca Raton, FL, 1996.



\bibitem{E}A.B. Evans, {\it The admissibility of sporadic simple groups}, J. Algebra {\bf 321} (2009), no. 1, 105–116.

\bibitem{GH} L. Goddyn and K. Halasz, {\it All group-based latin squares possess near transversals}, J. Combin. Designs {\bf 28} (2020), no. 5, 358--365.


\bibitem{G} B. Gordon, {\it Sequences in groups with distinct partial products}, 
{ Pacific J. Math.} {\bf 11} (1961), no. 4, 1309--1313.

\bibitem{HP}M. Hall and L.J. Paige. {\it Complete mappings of finite groups}, Pacific J. Math. {\bf 5} (1955), 541--549.





\bibitem{J} M. Javaheri, {\it Doubly sequenceable groups}, {\it J. Combin. Designs} {\bf 32} (2024), no. 7, 371--387.

\bibitem{Keedwell} A.D. Keedwell, {\it Sequenceable groups, generalized complete mappings, neofields and block designs} (Reynolds Antoine Casse, editor), Proc. of Tenth Austral. Conf. on Comb. Math. Lecture Notes in Math., vol. 1036, Springer Berlin, Heidelberg, 1983, pp. 49--71.


\bibitem{Ollis} M.A. Ollis, {\it Sequenceable groups and related topics}, Electron. J. Combin. {\bf 20} (2013), no. 2, 1--33. (Version 2).

\bibitem{P} L.J. Paige. {\it A note on finite abelian groups}, Bull. Amer. Math. Soc. {\bf 53} (1947), no. 6, 590--593.



\bibitem{VW}M. Vaughan-Lee and I. M. Wanless, {\it Latin Squares and the Hall–Paige Conjecture}, Bull. London Math. Soc. {\bf 35} (2003), no. 2, 191--195.

\bibitem{Wang}C.-D Wang, {\it On the harmoniousness of dicyclic groups
}, Discrete Math. {\bf 120} (1993), no. 1-3, 221--225. 

\bibitem{W}S. Wilcox, {\it Reduction of the Hall–Paige conjecture to sporadic simple groups}, J. of Algebra {\bf 321} (2009), no. 5, 1407--1428.

\end{thebibliography}
\end{document}